\newtheorem{theorem}{Theorem}
\newtheorem*{theorem*}{Theorem}
\theoremstyle{plain}
\newtheorem{lemma}{Lemma}
\newtheorem{remark}{Remark}
\numberwithin{equation}{section}
\newtheorem*{KSZ}{Generalized Kahane--Salem--Zygmund}
\begin{document}
\title[Some applications of the H\"{o}lder inequality for mixed sums]{Some
applications of the H\"{o}lder inequality for mixed sums}
\author[Albuquerque]{N. Albuquerque}
\address[N. Albuquerque]{Departamento de Matem\'{a}tica, Universidade
Federal da Para\'{\i}ba, 58.051-900 - Jo\~{a}o Pessoa, Brazil.}
\email{ngalbqrq@gmail.com}
\author[Nogueira]{T. Nogueira}
\address[T. Nogueira]{Departamento de Matem\'{a}tica, Universidade Federal
da Para\'{\i}ba, 58.051-900 - Jo\~{a}o Pessoa, Brazil.}
\email{tonykleverson@gmail.com}
\author[N\'u\~nez]{D. N\'u\~nez-Alarc\'on}
\address[D. N\'u\~nez]{Departamento de Matem\'{a}tica, Universidade Federal
de Pernambuco, 50.740-560 - Recife, Brazil.}
\email{danielnunezal@gmail.com}
\author[Pellegrino]{D. Pellegrino}
\address[D. Pellegrino]{Departamento de Matem\'{a}tica, Universidade Federal
da Para\'{\i}ba, 58.051-900 - Jo\~{a}o Pessoa, Brazil.}
\email{dmpellegrino@gmail.com and pellegrino@pq.cnpq.br}
\author[Rueda]{P. Rueda}
\address[P. Rueda]{Departamento de An\'{a}lisis Matem\'{a}tico, Universidad
de Valencia, 46100 Burjassot, Valencia.}
\email{pilar.rueda@uv.es}
\thanks{D. Pellegrino is supported by CNPq}
\keywords{multiple summing operators, absolutely summing operators,
Bohnenblust--Hille inequality}

\begin{abstract}
We use the H\"{o}lder inequality for mixed exponents to prove some optimal
variants of the generalized Hardy--Littlewood inequality for $m$-linear
forms on $\ell _{p}$ spaces with mixed exponents. Our results extend recent
results of Araujo \textit{et al.}
\end{abstract}

\maketitle

\section{Introduction}

\bigskip Several extensions and generalizations of the famous H\"{o}lder's
inequality have appeared along the time. One of these extensions is the H{%
\"{o}}lder inequality for mixed $L_{p}$ spaces. This inequality seems to
have been proved for the first time in 1961, by A. Benedek and R. Panzone
\cite{bene}, although its roots seem to go back to the work of W.A.J.
Luxemburg \cite{lux}. Very recently, this inequality was rediscovered in
Functional Analysis in a somewhat simplified form of an interpolative result
(see \cite{alb, isr}). In this note we obtain an application of this H\"{o}%
lder inequality to the Hardy--Littlewood inequality for multilinear forms.

From now on, $\mathbb{K}$ $=\mathbb{R}$ or $\mathbb{C}$ and $\mathbf{i}%
=(i_1,\dots, i_m) \in \mathbb{N}^{m}$ is a multi-index. For scalar matrices $%
a^{(k)}= \left( a_{\mathbf{i}}^{(k)}\right) _{i_1,\dots,i_m=1}^{n}$, $%
k=1,\dots,N$, we consider the coordinate product
\begin{equation*}
a^{(1)}\cdot \ldots \cdot a^{(N)} := \left( a_{\mathbf{i}}^{(1)}\cdots a_{%
\mathbf{i}}^{(N)}\right)_{i_1,\dots,i_m=1}^{n}.
\end{equation*}

The H\"{o}lder inequality for mixed sums is stated as follows (see also \cite%
[Theorem 2.49]{four}):

\begin{theorem}[H\"{o}lder's inequality for mixed $\ell _{p}$ spaces]
\label{GenHol}(\cite{lux, bene}) Let $r_{j},q_{j}(k) \in (0, + \infty ]$,
for $j=1,\dots,m,\ k=1,\dots,N$, be such that
\begin{equation*}
\frac{1}{r_{j}}=\frac{1}{q_{j}(1)}+\cdots +\frac{1}{q_{j}(N)},\quad j\in
\{1,2,\ldots ,m\}
\end{equation*}%
and let $a^{(k)}=\left( a_{\mathbf{i}}^{(k)}\right) _{i_1,\dots,i_m=1}^{n}$
be a scalar matrix for all $k=1,\dots,N$. Then
\begin{align*}
& \left( \sum_{i_{1}=1}^{n}\left( \dots \left( \sum_{i_{m}=1}^{n}|a_{\mathbf{%
i}}^{(1)}\cdot \ldots \cdot a_{\mathbf{i}}^{(N)}|^{r_{m}}\right) ^{\frac{%
r_{m-1}}{r_{m}}}\dots \right) ^{\frac{r_{1}}{r_{2}}}\right) ^{\frac{1}{r_{1}}%
} \\
& \leq \prod_{k=1}^{N}\left[ \left( \sum_{i_{1}=1}^{n}\left( \dots \left(
\sum_{i_{m}=1}^{n}|a_{\mathbf{i} }^{(k)}|^{q_{m}(k)}\right) ^{\frac{%
q_{m-1}(k)}{q_{m}(k)}}\dots \right) ^{\frac{q_{1}(k)}{q_{2}(k)}}\right) ^{%
\frac{1}{q_{1}(k)}}\right].
\end{align*}
\end{theorem}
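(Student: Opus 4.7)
The plan is a double induction: first handle the single-index case ($m=1$) for arbitrary $N$ by iterated classical H\"older, then lift to the mixed multi-index setting by induction on the number of indices $m$, peeling off one index at a time from inside.

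The base of the outer induction is the following statement: for exponents $p(1),\dots,p(N) \in (0,+\infty]$ satisfying $\tfrac{1}{\rho} = \sum_{k=1}^{N} \tfrac{1}{p(k)}$, one has
\[ \Bigl(\sum_{i=1}^n |b_i^{(1)} \cdots b_i^{(N)}|^{\rho}\Bigr)^{1/\rho} \le \prod_{k=1}^{N} \Bigl(\sum_{i=1}^n |b_i^{(k)}|^{p(k)}\Bigr)^{1/p(k)}. \]
This is standard: a short induction on $N$ reduces it to the classical two-factor H\"older inequality, or, equivalently, one applies multi-factor H\"older with conjugate exponents $p(1)/\rho, \ldots, p(N)/\rho$ to the sequences $(|b_i^{(k)}|^{\rho})_i$ (observe that $p(k) \ge \rho$ forces $p(k)/\rho \ge 1$, so H\"older is legal even when $\rho<1$).

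For the inductive step on $m$, I would fix the outer indices $(i_1, \ldots, i_{m-1})$ and apply the base case to the innermost sum over $i_m$ with $\rho = r_m$ and $p(k) = q_m(k)$. This produces
\[ \Bigl(\sum_{i_m} |a_{\mathbf{i}}^{(1)} \cdots a_{\mathbf{i}}^{(N)}|^{r_m}\Bigr)^{1/r_m} \le \prod_{k=1}^{N} \alpha^{(k)}_{i_1,\dots,i_{m-1}}, \qquad \alpha^{(k)}_{i_1,\dots,i_{m-1}} := \Bigl(\sum_{i_m} |a_{\mathbf{i}}^{(k)}|^{q_m(k)}\Bigr)^{1/q_m(k)}. \]
Substituting this bound into the remaining $m-1$ nested sums reduces the claim to the same inequality in $m-1$ indices, applied to the $N$ matrices $\alpha^{(k)}$ with exponents $(r_j)_{j<m}$ and $(q_j(k))_{j<m}$; the compatibility $\tfrac{1}{r_j} = \sum_k \tfrac{1}{q_j(k)}$ is inherited for every $j \le m-1$, so the inductive hypothesis applies.

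The main difficulty I anticipate is bookkeeping rather than genuine analysis: one must verify that when the outermost mixed norms of the $\alpha^{(k)}$ produced by the induction are expanded via the definition of $\alpha^{(k)}$, they telescope into precisely the prescribed mixed norms of the $a^{(k)}$ with exponents $q_1(k), \ldots, q_m(k)$. There is also the cosmetic matter of degenerate cases where some $r_j$ or $q_j(k)$ equals $+\infty$: one interprets the corresponding sum as a supremum, under which the classical H\"older inequality remains valid, so the induction carries through unchanged.
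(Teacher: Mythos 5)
Your argument is correct, and it is essentially the classical proof of this result. The paper itself offers no proof of Theorem \ref{GenHol}: it is quoted from Luxemburg and Benedek--Panzone (see also Adams--Fournier, Theorem 2.49), and the inductive scheme you describe --- multi-factor H\"older on the innermost index, followed by monotonicity of the remaining nested $(r_1,\dots,r_{m-1})$-norm and the inductive hypothesis applied to the matrices $\alpha^{(k)}$ --- is precisely how those sources establish it. The two points you flag as potential difficulties are indeed the only ones requiring care, and both go through: the telescoping works because the innermost block $\bigl(\sum_{i_m}|a^{(k)}_{\mathbf i}|^{q_m(k)}\bigr)^{q_{m-1}(k)/q_m(k)}$ equals $\bigl(\alpha^{(k)}_{i_1,\dots,i_{m-1}}\bigr)^{q_{m-1}(k)}$, so the $(m-1)$-index mixed norm of $\alpha^{(k)}$ with exponents $q_1(k),\dots,q_{m-1}(k)$ is exactly the $m$-index mixed norm of $a^{(k)}$; and the base case is legitimate for $\rho<1$ because $1/\rho=\sum_k 1/p(k)$ forces $p(k)/\rho\ge 1$, as you note. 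You could state explicitly that substituting the innermost bound uses monotonicity of $t\mapsto t^{r_{m-1}}$ on $[0,\infty)$ together with monotonicity of the outer mixed norm in its (nonnegative) entries, but this is routine.
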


\bigskip As usual, for a positive integer $N$ we define $\ell _{\infty
}^{N}= \mathbb{K}^{N}$ endowed with the supremum norm ; by $e_{j}$ we denote
the canonical vectors which entries are $1$ at $j$-th position and 0
otherwise. We define $X_{p}:=\ell _{p}$, for $1\leq p<\infty $, and $%
X_{\infty }:=c_{0}$. For $\mathbf{p}:= (p_1, \ldots , p_m) \in [1, +
\infty]^m$ let
\begin{equation*}
\left| \frac{1}{\mathbf{p}}\right| := \frac{1}{p_1} + \cdots + \frac{1}{p_m}.
\end{equation*}

\bigskip

We present a brief chronology of well known results concerning the
``Hardy-Littlewood'' inequality.

\begin{theorem*}[Bohnenblust--Hille inequality (\protect\cite{bh}, 1931)]
There exists a (optimal) constant $C_{m,\infty }^{\mathbb{K}}\geq 1$ such
that, for every continuous $m$--linear form $T:c_{0}\times \cdots \times
c_{0}\rightarrow \mathbb{K}$,
\begin{equation}
\left( \sum_{i_{1},\ldots ,i_{m}=1}^{\infty }\left\vert T(e_{i_{1}},\ldots
,e_{i_{m}})\right\vert ^{\frac{2m}{m+1}}\right) ^{\frac{m+1}{2m}}\leq
C_{m,\infty }^{\mathbb{K}}\left\Vert T\right\Vert .  \label{u88}
\end{equation}
Moreover, the exponent $\frac{2m}{m+1}$ is optimal.
\end{theorem*}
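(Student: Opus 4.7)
Consonant with the theme of the paper, the plan is to derive \eqref{u88} by interpolating $m$ ``mixed Littlewood-type'' inequalities through Theorem~\ref{GenHol}.

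\emph{Step 1 (mixed Littlewood estimates).} For each $k\in\{1,\ldots,m\}$ I would first establish
\[
A_k(T):=\sum_{i_k=1}^{\infty}\Bigl(\sum_{i_1,\ldots,i_{k-1},i_{k+1},\ldots,i_m=1}^{\infty}|T(e_{i_1},\ldots,e_{i_m})|^{2}\Bigr)^{1/2}\leq D_m\|T\|.
\]
This is a standard consequence of Khinchin's inequality: evaluating $T$ on independent Rademacher randomizations $\sum_{i_j}\varepsilon_{j,i_j}(t)e_{i_j}$ in the $m-1$ variables $j\neq k$, integrating in~$t$, and applying Minkowski's inequality to pull the $i_k$-summation outside the integral leaves an $\ell_1$-summation of a linear functional on $c_0$, which is bounded by $\|T\|$.

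\emph{Step 2 (interpolation via mixed H\"older).} Apply Theorem~\ref{GenHol} with $N=m$ factors $a^{(k)}_{\mathbf i}:=|T(e_{\mathbf i})|$, assigning to the $k$-th factor the mixed exponents $(q_1(k),\ldots,q_m(k))$ with $q_k(k)=1$ and $q_j(k)=2$ for $j\neq k$. The compatibility condition $\frac{1}{r_j}=\sum_{k=1}^{m}\frac{1}{q_j(k)}=1+\frac{m-1}{2}=\frac{m+1}{2}$ forces $r_j=\frac{2}{m+1}$ for every~$j$, so the left-hand side of Theorem~\ref{GenHol} equals $\bigl(\sum_{\mathbf i}|T(e_{\mathbf i})|^{2m/(m+1)}\bigr)^{(m+1)/2}$, while each factor on the right-hand side is a mixed norm of $|T(e_{\mathbf i})|$ with outer index $i_1$. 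For $k\neq 1$ this ordering differs from that of $A_k$, so I would dominate it by $A_k$ via Minkowski's inequality (licit because $2\geq 1$), and thence by $D_m\|T\|$. Taking the $m$-th root yields \eqref{u88} with $C^{\mathbb K}_{m,\infty}=D_m$.

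\emph{Step 3 (optimality of $\tfrac{2m}{m+1}$).} The classical Kahane--Salem--Zygmund probabilistic construction produces random $m$-linear forms $T_n$ on $c_0^n$ (with $\pm 1$ coefficients) satisfying $\|T_n\|=O\bigl(n^{(m+1)/2}\sqrt{\log n}\bigr)$, while trivially $\bigl(\sum|T_n(e_{\mathbf i})|^{p}\bigr)^{1/p}=n^{m/p}$. Any $p<\frac{2m}{m+1}$ would force the ratio to diverge as $n\to\infty$, contradicting a uniform constant.

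The principal obstacle is the index-ordering bookkeeping in Step~2: Theorem~\ref{GenHol} nests its iterated sums rigidly in the order $(i_1,\ldots,i_m)$, whereas the $A_k$'s place the $\ell_1$-summation in the arbitrary position~$k$. Converting between the two orderings requires a Minkowski swap in the correct direction, and this is the most error-prone part of the argument.
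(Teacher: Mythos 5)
Your proposal is correct, but note that the paper itself offers no proof of this statement: the Bohnenblust--Hille inequality is quoted from \cite{bh} as classical background, and within the paper it also falls out as the special case $p_{1}=\dots=p_{m}=+\infty$, $s_{1}=\dots=s_{m}=\frac{2m}{m+1}$ of Theorem \ref{main111} (equivalently, of Theorem \ref{9900}, as Remark \ref{7654} records). What you have written is the standard modern interpolative proof --- the same mechanism that underlies Theorem \ref{main111} in \cite{alb} and that the present paper reuses in Sections \ref{sec_proof}--\ref{sec_gen}: one proves the $m$ mixed Littlewood-type estimates $A_{k}(T)\leq D_{m}\Vert T\Vert$ by iterated Khinchin, and then applies Theorem \ref{GenHol} with $N=m$ identical factors $|T(e_{\mathbf i})|$ and exponents $q_{k}(k)=1$, $q_{j}(k)=2$ ($j\neq k$), which forces $r_{j}=\frac{2}{m+1}$ and collapses the left-hand side to $\bigl(\sum_{\mathbf i}|T(e_{\mathbf i})|^{2m/(m+1)}\bigr)^{(m+1)/2}$. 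Your bookkeeping in Step 2 is sound: the $k$-th H\"older factor is $\bigl(\sum_{i_{1},\dots,i_{k-1}}\bigl(\sum_{i_{k}}(\sum_{i_{k+1},\dots,i_{m}}|T|^{2})^{1/2}\bigr)^{2}\bigr)^{1/2}$, and the triangle inequality in $\ell_{2}$ over the block $(i_{1},\dots,i_{k-1})$ pushes the $\ell_{1}$-sum in $i_{k}$ to the outside, which is exactly the direction that dominates it by $A_{k}(T)$. Two small imprecisions, neither fatal: in Step 1 the interchange $\sum_{i_{k}}\int=\int\sum_{i_{k}}$ is Tonelli's theorem, not Minkowski (Minkowski is only needed in Step 2); and in Step 3 the Kahane--Salem--Zygmund bound as stated in Section \ref{sec_proof} carries no $\sqrt{\log n}$ factor, though its presence would not affect the exponent comparison $\frac{m}{p}\leq\frac{m+1}{2}$.
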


\begin{theorem*}[Hardy--Littlewood \protect\cite{hl} and Praciano-Pereira
\protect\cite{pp} (1934 and 1981)]
Let $\left\vert \frac{1}{\mathbf{p}}\right\vert \leq \frac{1}{2}$. There
exists a (optimal) constant $C_{m,\mathbf{p}}^{\mathbb{K}}\geq 1$ such that,
for every continuous $m$-linear form $T:X_{p_{1}}\times \cdots \times
X_{p_{m}} \to \mathbb{K}$,
\begin{equation}
\left( \sum_{i_{1},\ldots ,i_{m}=1}^{\infty }\left\vert T(e_{i_{1}},\ldots
,e_{i_{m}})\right\vert ^{\frac{2m}{m+1-2\left\vert \frac{1}{\mathbf{p}}%
\right\vert }}\right) ^{\frac{m+1-2\left\vert \frac{1}{\mathbf{p}}%
\right\vert }{2m}} \leq C_{m,\mathbf{p}}^{\mathbb{K}}\left\Vert T\right\Vert
.  \label{i99}
\end{equation}
Moreover, the exponent $\frac{2m}{m+1-2\left| \frac{1}{\mathbf{p}} \right|}$
is optimal.
\end{theorem*}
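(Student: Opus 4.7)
The plan is to derive the inequality by combining the classical Bohnenblust--Hille inequality (the endpoint $p_{1}=\cdots=p_{m}=\infty$) with the H\"{o}lder inequality for mixed sums (Theorem~\ref{GenHol}), and to establish optimality of the exponent via the Kahane--Salem--Zygmund random construction. A standard density and monotone convergence argument reduces the problem to bounding finite partial sums over $\mathbf{i}\in\{1,\dots,n\}^{m}$ by $C_{m,\mathbf{p}}^{\mathbb{K}}\|T\|$ uniformly in $n$. Set $a_{\mathbf{i}}:=T(e_{i_{1}},\dots,e_{i_{m}})$ and $s:=\frac{2m}{m+1-2|1/\mathbf{p}|}$.

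Two a priori ``slice'' estimates on $(a_{\mathbf{i}})$ are available. First, $\ell_{p}$-duality applied in the $k$-th variable, with the remaining entries frozen at canonical unit vectors, gives for each $k\in\{1,\dots,m\}$
\begin{equation*}
\Big(\sum_{i_{k}=1}^{n}|a_{\mathbf{i}}|^{p_{k}^{*}}\Big)^{1/p_{k}^{*}}\leq \|T\|.
\end{equation*}
Second, the Bohnenblust--Hille inequality applied to $T$ viewed as a multilinear form on $c_{0}^{m}$ (using that $\|e_{i}\|_{\ell_{p_{j}}}=\|e_{i}\|_{c_{0}}=1$) yields the isotropic bound
\begin{equation*}
\Big(\sum_{i_{1},\dots,i_{m}=1}^{n}|a_{\mathbf{i}}|^{2m/(m+1)}\Big)^{(m+1)/2m}\leq C_{m,\infty}^{\mathbb{K}}\|T\|.
\end{equation*}

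The heart of the argument is to interpolate between these two kinds of estimates via Theorem~\ref{GenHol}. Concretely, one writes $|a_{\mathbf{i}}|^{s}$ as a product of $N$ factors $|a_{\mathbf{i}}|^{s/N}$ and selects exponents $q_{j}(k)\in(0,\infty]$, $j=1,\dots,m$, $k=1,\dots,N$, satisfying $\sum_{k=1}^{N}1/q_{j}(k)=1/s$ for every $j$, in such a way that, for each $k$, the iterated mixed $\ell$-norm of $(|a_{\mathbf{i}}|)$ with profile $(q_{1}(k),\dots,q_{m}(k))$ reduces---after possibly permuting the summation order---to either the Bohnenblust--Hille bound or to one of the $\ell_{p_{k}^{*}}$-bounds above. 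The hypothesis $|1/\mathbf{p}|\leq 1/2$ is precisely what guarantees that such exponents exist in the admissible range, and the value $s$ emerges from averaging $\frac{2m}{m+1}$ against the dual exponents $p_{k}^{*}$.

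The main obstacle is the combinatorial bookkeeping: the matrix $(q_{j}(k))$ must simultaneously respect the H\"{o}lder constraint row-wise and align the column-wise profiles with the available slice estimates. Once such a matrix is exhibited, Theorem~\ref{GenHol} delivers the inequality, with constant a product of the individual slice constants. Optimality of the exponent $s$ is then obtained from the generalized Kahane--Salem--Zygmund inequality, which produces for each $n$ an $m$-linear form $T_{n}$ on $X_{p_{1}}\times\cdots\times X_{p_{m}}$ with $\pm 1$ coefficients and $\|T_{n}\|\lesssim n^{(m+1)/2-|1/\mathbf{p}|}$; plugging $T_{n}$ into a putative inequality with exponent $s'<s$ and comparing asymptotics as $n\to\infty$ yields $n^{m/s'}\lesssim n^{(m+1)/2-|1/\mathbf{p}|}$, forcing $s'\geq s$.
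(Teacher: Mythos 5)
The paper does not prove this statement: it is quoted as a classical result of Hardy--Littlewood and Praciano-Pereira, so there is no internal proof to compare against. Judged on its own, your proposal has a genuine gap at its core. The second of your two a priori estimates is false: the Bohnenblust--Hille inequality applied to ``$T$ viewed as a multilinear form on $c_0^m$'' bounds the $\frac{2m}{m+1}$-sum of coefficients by $C\,\Vert T\Vert_{c_0\times\cdots\times c_0}$, and since the unit ball of $\ell_{p}^n$ is \emph{contained} in that of $\ell_\infty^n$, one only has $\Vert T\Vert_{\ell_{p_1}\times\cdots\times\ell_{p_m}}\le\Vert T\Vert_{c_0\times\cdots\times c_0}$, which is the wrong direction; the two norms can differ by a power of $n$. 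Concretely, for $m=2$, $p_1=p_2=4$ and $T(x,y)=\sum_{j=1}^n x_jy_j$ one has $\Vert T\Vert_{\ell_4\times\ell_4}\le n^{1/2}$ while $\bigl(\sum_{i,j}|T(e_i,e_j)|^{4/3}\bigr)^{3/4}=n^{3/4}$, so the claimed ``isotropic bound'' with $\Vert T\Vert$ the $\ell_{\mathbf p}$-norm fails. Your first estimate (the duality slice bound with exponent $p_k^{*}$ in the $k$-th variable) is correct, but it corresponds to the mixed profile $(\infty,\dots,p_k^{*},\dots,\infty)$, and combining only these $m$ profiles through Theorem~\ref{GenHol} yields the exponent $s=\sum_k p_k^{*}$ (e.g.\ $s=m$ when all $p_k=\infty$), far weaker than $\frac{2m}{m+1-2|1/\mathbf{p}|}$.

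What is actually needed --- and what constitutes the real content of the theorem --- is the family of \emph{anisotropic} estimates with exponent $\bigl(1-|\tfrac{1}{\mathbf p}|\bigr)^{-1}$ in one coordinate and $2$ in all the others, one for each choice of the distinguished coordinate; averaging these $m$ profiles with equal weights via the mixed H\"older inequality produces exactly $m/s=\frac{m+1}{2}-|\tfrac{1}{\mathbf p}|$. Those anisotropic estimates are not consequences of duality or of transferring Bohnenblust--Hille from $c_0$; they require the Khintchine inequality / cotype arguments (equivalently, the inclusion theorem for multiple summing operators), none of which appears in your outline. You also leave the ``combinatorial bookkeeping'' --- the explicit matrix $(q_j(k))$ --- unexhibited, and with the ingredients you list no admissible matrix exists. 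The optimality half of your argument, via the generalized Kahane--Salem--Zygmund form with $\Vert T_n\Vert\lesssim n^{\frac{m+1}{2}-|1/\mathbf{p}|}$ and the comparison $n^{m/s'}\lesssim n^{\frac{m+1}{2}-|1/\mathbf{p}|}$, is correct and matches the technique the paper uses elsewhere.
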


\begin{theorem*}[Hardy--Littlewood \protect\cite{hl} and
Dimant--Sevilla-Peris \protect\cite{dim} (1934 and 2013)]
Let $\frac{1}{2}\leq \left\vert \frac{1}{\mathbf{p}} \right\vert <1$. There
exists a (optimal) constant $D_{m,\mathbf{p}}^{\mathbb{K}}\geq 1$ such that
\begin{equation}
\left( \sum_{i_{1},\dots ,i_{m}=1}^{\infty }\left\vert T(e_{i_{1}},\dots
,e_{i_{m}})\right\vert ^{\frac{1}{1-\left\vert \frac{1}{\mathbf{p}}
\right\vert }}\right) ^{1-\left\vert \frac{1}{\mathbf{p}}\right\vert} \leq
D_{m,\mathbf{p}}^{\mathbb{K}}\Vert T\Vert  \label{i99jagsytsb}
\end{equation}
for every continuous $m$-linear form $T:X_{p_{1}}\times \cdots \times
X_{p_{m}}\rightarrow \mathbb{K}$. Moreover, the exponent $\left(1-\left\vert
\frac{1}{\mathbf{p}}\right\vert\right)^{-1}$ is optimal.
\end{theorem*}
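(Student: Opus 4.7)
The plan is to derive the bound \eqref{i99jagsytsb} by combining the Bohnenblust--Hille inequality with the H\"older inequality for mixed sums (Theorem~\ref{GenHol}), and to establish optimality via a Kahane--Salem--Zygmund construction. Writing $a_{\mathbf{i}}:=T(e_{i_1},\ldots,e_{i_m})$ and $s:=1/(1-|1/\mathbf{p}|)$, the goal is $\left(\sum_{\mathbf{i}}|a_{\mathbf{i}}|^s\right)^{1/s}\leq C\,\|T\|$.

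The first ingredient I would assemble is a family of \emph{one-variable} bounds. Freezing all arguments of $T$ but the $k$-th at canonical vectors yields a continuous linear functional on $X_{p_k}$ of norm at most $\|T\|$; duality then gives
\[
\sup_{\mathbf{i}_{-k}}\Big(\sum_{i_k}|a_{\mathbf{i}}|^{p_k^{*}}\Big)^{1/p_k^{*}}\leq \|T\|
\]
for every $k\in\{1,\ldots,m\}$, i.e., a mixed $\ell_\infty$--$\ell_{p_k^{*}}$ estimate with $\ell_{p_k^{*}}$ in position $k$. The second ingredient is the Bohnenblust--Hille inequality, which (either by restricting $T$ to $c_0^m$ on finite sections and using the inclusion $\ell_{p_k}\hookrightarrow c_0$, or via a Khinchine/cotype symmetrization) produces an auxiliary $\ell_{2m/(m+1)}$-type control on the coefficient array in directions where the $X_{p_k}$-dependence can be traded for an $\ell_\infty$ one.

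The core step is then to splice these bounds together via Theorem~\ref{GenHol}, applied to $N$ copies of $|a_{\mathbf{i}}|$ raised to weights $\beta_1,\ldots,\beta_N$ with $\sum_k\beta_k=1$, and with mixed exponents $q_j(k)$ chosen so that
\[
\sum_{k=1}^{N}\frac{1}{q_j(k)}=\frac{1}{s}=1-|1/\mathbf{p}|,\quad j=1,\ldots,m,
\]
and so that each of the $N$ factors on the right-hand side of H\"older matches (a power of) one of the bounds assembled above. The product then collapses to a constant multiple of $\|T\|$, delivering the target inequality.

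The main obstacle is the combinatorial bookkeeping in Step~3. In the regime $|1/\mathbf{p}|\geq 1/2$ the one-variable bounds alone are not sharp enough, so the Bohnenblust--Hille piece must enter the H\"older product with a carefully tuned weight; I anticipate this forces $N=m+1$ copies, with one copy absorbing the BH contribution and the remaining $m$ copies each carrying one of the $p_k^{*}$ bounds. Optimality of the exponent $s$ is then standard: a Kahane--Salem--Zygmund construction produces, for each $n$, an $m$-linear form $T_n$ on $X_{p_1}\times\cdots\times X_{p_m}$ whose coefficients have modulus $1$ and $\|T_n\|$ of the correct polynomial order in $n$, so that any exponent strictly smaller than $1/(1-|1/\mathbf{p}|)$ would yield a contradiction upon sending $n\to\infty$.
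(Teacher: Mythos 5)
You should first note that the paper does not actually prove this statement: it is quoted as a known theorem, with the proof delegated to \cite{hl} and \cite{dim}. So your proposal has to stand on its own, and it has two genuine gaps. The central one is the mechanism by which you import the Bohnenblust--Hille inequality. The inclusion $\ell_{p_k}\hookrightarrow c_0$ goes the wrong way for this purpose: since $\Vert x\Vert_\infty\leq\Vert x\Vert_{p_k}$, the unit ball of $\ell_{p_k}^n$ sits \emph{inside} that of $\ell_\infty^n$, so a form bounded on $\ell_{p_1}^n\times\cdots\times\ell_{p_m}^n$ is bounded on $\ell_\infty^n\times\cdots\times\ell_\infty^n$ only after inflating the norm by a factor of order $n^{\vert\frac{1}{\mathbf{p}}\vert}$; applying \eqref{u88} to that restriction yields an $n$-dependent estimate, not \eqref{i99jagsytsb}. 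The standard way to make Bohnenblust--Hille interact with $\ell_p$ domains is the multiplication-operator trick $S(x^{(1)},\dots,x^{(m)})=T(y^{(1)}x^{(1)},\dots,y^{(m)}x^{(m)})$ with $y^{(k)}\in B_{\ell_{p_k}}$, followed by an optimization over the $y^{(k)}$; but that route, combined with the one-variable duality bounds and mixed H\"older exactly as in your Step 3, is the proof of the \emph{Praciano-Pereira} regime $\vert\frac{1}{\mathbf{p}}\vert\leq\frac12$ and produces the exponent $2m/(m+1-2\vert\frac{1}{\mathbf{p}}\vert)$. In the regime $\frac12<\vert\frac{1}{\mathbf{p}}\vert<1$ of the present statement that exponent is strictly smaller than $(1-\vert\frac{1}{\mathbf{p}}\vert)^{-1}$ and the corresponding inequality is \emph{false} (see below), so no choice of weights $\beta_k$ can make your H\"older product collapse to $C\Vert T\Vert$ at the claimed exponent. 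This is precisely why Dimant--Sevilla-Peris needed a different argument, and why the present paper develops the anisotropic exponents $\delta_{m-k+1}^{p_k,\dots,p_m}$ of Theorem \ref{delta} rather than interpolating Bohnenblust--Hille in this range.

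The second gap is optimality. A Kahane--Salem--Zygmund form has unimodular coefficients and $\Vert A\Vert\leq C_m n^{\frac{m+1}{2}-\vert\frac{1}{\mathbf{p}}\vert}$, so it only forces $r\geq 2m/(m+1-2\vert\frac{1}{\mathbf{p}}\vert)$, which is strictly weaker than $r\geq(1-\vert\frac{1}{\mathbf{p}}\vert)^{-1}$ whenever $\vert\frac{1}{\mathbf{p}}\vert>\frac12$. The correct extremal example here is the diagonal form $D(x^{(1)},\dots,x^{(m)})=\sum_{j=1}^{n}x_j^{(1)}\cdots x_j^{(m)}$, for which H\"older gives $\Vert D\Vert\leq n^{1-\vert\frac{1}{\mathbf{p}}\vert}$ while the left-hand side of \eqref{i99jagsytsb} equals $n^{1/r}$, forcing $r\geq(1-\vert\frac{1}{\mathbf{p}}\vert)^{-1}$; this is exactly the form $D_m^n$ the paper uses in Section \ref{sec-final} (and it simultaneously shows the Praciano-Pereira exponent fails past $\vert\frac{1}{\mathbf{p}}\vert=\frac12$). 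So both halves of your plan need replacement: the upper bound requires an input beyond Bohnenblust--Hille plus mixed H\"older, and the lower bound requires the diagonal form rather than Kahane--Salem--Zygmund.
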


The Hardy Littlewood inequality for $m$-linear forms on $\ell _{p}$ spaces
with mixed exponents and $\left| \frac{1}{\mathbf{p}}\right| \leq \frac{1}{2}
$ was proved in \cite[Theorem 1.2]{alb} (see also \cite[p. 3729]{alb} for
more details) and can be stated as follows:

\begin{theorem}[Generalized Hardy--Littlewood inequality]
\label{main111}(\cite{alb}) Let $m\geq 2$ be a positive integer, $\mathbf{p}%
:= (p_1, \ldots , p_m) \in [1, + \infty]^m$ such that $\left| \frac{1}{%
\mathbf{p}}\right| \leq \frac{1}{2}$ and $\mathbf{s} :=
(s_{1},\dots,s_{m})\in \left[ \left( 1-\left| \frac{1}{\mathbf{p}} \right|
\right)^{-1},2\right]^{m}.$ The following assertions are equivalent:

\noindent(a) There exists $D_{m,\mathbf{s},\mathbf{p}}^{\mathbb{K}}\geq 1$
satisfying,
\begin{equation}
\left( \sum_{i_{1}=1}^{n}\left( \ldots \left( \sum_{i_{m}=1}^{n}\left\vert
A\left( e_{i_{1}},\ldots ,e_{i_{m}}\right) \right\vert ^{s_{m}}\right) ^{
\frac{s_{m-1}}{s_{m}}}\ldots \right) ^{\frac{s_{1}}{s_{2}}}\right) ^{\frac{1
}{s_{1}}}\leq D_{m,\mathbf{s},\mathbf{p}}^{\mathbb{K}}\Vert A\Vert .
\label{q3}
\end{equation}
for every continuous $m$-linear map $A:\ell _{p_1}^{n}\times \cdots \times
\ell _{p_m}^{n}\rightarrow \mathbb{K}$ and all positive integers $n$.

\noindent(b) $\displaystyle
\frac{1}{s_{1}}+\cdots +\frac{1}{s_{m}}\leq \frac{m+1}{2}-\left| \frac{1}{%
\mathbf{p}}\right|. $
\end{theorem}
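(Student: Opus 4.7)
I treat the two directions of the equivalence separately.

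\textit{Necessity (a)$\Rightarrow$(b).} My plan is to invoke the Generalized Kahane--Salem--Zygmund inequality stated earlier. For each $n$ it produces an $m$-linear form $A_n\colon\ell_{p_1}^n\times\cdots\times\ell_{p_m}^n\to\mathbb{K}$ satisfying $|A_n(e_{\mathbf{i}})|=1$ for every multi-index $\mathbf{i}$ and $\|A_n\|\lesssim n^{(m+1)/2-|1/\mathbf{p}|}$. Plugging $A_n$ into (\ref{q3}) makes the left-hand side equal to $n^{\sum_j 1/s_j}$, so the inequality can hold uniformly in $n$ only if $\sum_j 1/s_j\le(m+1)/2-|1/\mathbf{p}|$.

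\textit{Sufficiency (b)$\Rightarrow$(a).} I proceed in three steps. \emph{Step 1 (reduction to the boundary case).} If $\sum_j 1/s_j<(m+1)/2-|1/\mathbf{p}|$ I decrease the $s_j$'s inside $[q,2]$ (where $q:=(1-|1/\mathbf{p}|)^{-1}$) until equality $\sum_j 1/s_j=(m+1)/2-|1/\mathbf{p}|$ holds. This is feasible because $|1/\mathbf{p}|\le 1/2$ guarantees $m(1-|1/\mathbf{p}|)\ge(m+1)/2-|1/\mathbf{p}|$, so the equality hyperplane actually meets $[q,2]^m$ below each $s_j$. Decreasing exponents only enlarges iterated mixed $\ell_{\mathbf{s}}$-norms, so it suffices to prove (\ref{q3}) on the boundary.

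\emph{Step 2 (H\"older interpolation through the vertices).} The set $\{(1/s_1,\dots,1/s_m):s_j\in[q,2],\ \sum 1/s_j=(m+1)/2-|1/\mathbf{p}|\}$ is a polytope with exactly $m$ extreme points, corresponding to the tuples $\mathbf{s}^{(k)}:=(2,\dots,2,q,2,\dots,2)$ with $q$ in the $k$-th slot. Setting $\theta_k:=(1/s_k-1/2)/(1/2-|1/\mathbf{p}|)$ yields $\theta_k\in[0,1]$, $\sum_k\theta_k=1$ and $1/s_j=\sum_k\theta_k/s_j^{(k)}$; the degenerate case $|1/\mathbf{p}|=1/2$ forces $\mathbf{s}=(2,\dots,2)$, which is the Praciano--Pereira diagonal inequality directly. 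Writing $|A(e_{\mathbf{i}})|=\prod_k|A(e_{\mathbf{i}})|^{\theta_k}$ and applying Theorem~\ref{GenHol} with $q_j(k):=s_j^{(k)}/\theta_k$ will yield
$$\text{LHS of }(\ref{q3})\le\prod_{k=1}^m V_k(A)^{\theta_k},$$
where $V_k(A)$ denotes the left-hand side of (\ref{q3}) computed with the vertex tuple $\mathbf{s}^{(k)}$.

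\emph{Step 3 (vertex estimates, the main obstacle).} The remaining task is to prove $V_k(A)\le C_k\|A\|$ for each $k$. Since iterated $\ell_2$-norms collapse, $V_k(A)$ depends only on which slot carries the $\ell_q$-exponent, so permuting the slots of $A$ (together with the corresponding $p_j$'s) reduces everything to the single case $k=m$, namely
$$\Bigl(\sum_{i_1,\dots,i_{m-1}}\Bigl(\sum_{i_m}|A(e_{\mathbf{i}})|^q\Bigr)^{2/q}\Bigr)^{1/2}\le C\|A\|.$$
I expect this to be the hardest part of the argument. A natural attempt is to randomize the first $m-1$ slots by Rademacher/Kahane averaging and apply the scalar Hardy--Littlewood inequality in the last slot, but a direct execution only yields $p_m^*$-summability in $i_m$ instead of $q$-summability, so the exponents from different slots must be combined. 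I would approach this either by induction on $m$ (viewing $A(\,\cdot\,,\dots,\cdot\,,y)$ as an $(m-1)$-linear form for each fixed $y\in\ell_{p_m}$ and exploiting the inductive vertex estimate together with duality $(\ell_{p_m})^{*}=\ell_{p_m^*}$) or via a Blei-type interpolation between the ``all-$\ell_2$'' and ``all-$\ell_q$'' iterated norms.
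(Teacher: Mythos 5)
First, note that the paper does not actually prove Theorem \ref{main111}: it is imported from \cite{alb} as a known result, so there is no in-paper argument to compare with and your proposal must stand on its own. Its necessity direction and Steps 1--2 of sufficiency are sound: since $\left|\frac1{\mathbf p}\right|\le\frac12$ forces every $p_j\ge 2$, the Kahane--Salem--Zygmund form gives $n^{\sum_j 1/s_j}\le C\,n^{(m+1)/2-\left|\frac1{\mathbf p}\right|}$ and hence (b); and the description of the extreme points of the boundary polytope, the weights $\theta_k$, and the mixed-H\"older factorization $\mathrm{LHS}\le\prod_k V_k(A)^{\theta_k}$ are all correct (this is exactly the interpolative mechanism used in \cite{alb} and in Sections \ref{sec_proof}--\ref{sec_gen} of the present paper).

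The gap is in Step 3, and it is twofold. (i) The reduction of every vertex to the case $k=m$ by permuting slots is invalid. For $1<k<m$ the ordered vertex norm is the three-level quantity
\[
V_k(A)=\Bigl(\sum_{i_1,\dots,i_{k-1}}\Bigl(\sum_{i_k}\Bigl(\sum_{i_{k+1},\dots,i_m}|A(e_{\mathbf i})|^2\Bigr)^{q/2}\Bigr)^{2/q}\Bigr)^{1/2},
\]
and the two blocks of $\ell_2$-sums cannot be merged across the middle $\ell_q$-sum. Permuting the $k$-th slot to the last position replaces $V_k(A)$ by $\bigl(\sum_{\hat i_k}(\sum_{i_k}|A(e_{\mathbf i})|^q)^{2/q}\bigr)^{1/2}$, which by Minkowski's inequality for mixed norms (the smaller exponent placed outermost yields the larger norm, and $q\le 2$) is a \emph{lower} bound for $V_k(A)$; bounding it therefore says nothing about $V_k(A)$. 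The workable reduction goes the other way: one proves $\bigl(\sum_{i_k}(\sum_{\hat i_k}|A(e_{\mathbf i})|^2)^{q/2}\bigr)^{1/q}\le C\Vert A\Vert$, with the $\ell_q$-sum over the distinguished index \emph{outermost}, and then Minkowski dominates every ordered vertex $V_k(A)$ by this quantity. (ii) Even for your chosen target you do not prove the vertex estimate; you name two candidate strategies and concede that the direct one produces the wrong summability exponent. That estimate --- $\ell_q$ outermost over one index and $\ell_2$ over the remaining $m-1$ indices, with $q=(1-\left|\frac1{\mathbf p}\right|)^{-1}$ --- is the analytic core of the theorem (it is where Khinchine--Kahane inequalities and the multiple-summing induction of \cite{alb} enter), so as written the proposal delivers the easy half and the interpolation scaffolding, but not the theorem.
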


\bigskip

In this paper we are mainly interested in what happens with (\ref{q3}) (and
consequently with \eqref{u88}, \eqref{i99} and \eqref{i99jagsytsb}) when
\begin{equation*}
\frac{1}{s_{1}}+\cdots +\frac{1}{s_{m}}>\frac{m+1}{2}-\left\vert \frac{1}{
\mathbf{p}}\right\vert .
\end{equation*}
From the statement of Theorem \ref{main111} it is obvious that the constants
involved will gain a dependence on $n$ when smaller exponents are
considered. We are mainly concerned in calculating the exact dependence on $%
n $ depending on the new exponents.

In \cite{archiv} a similar problem is considered having the
\textquotedblleft classical\textquotedblright\ Hardy--Littlewood inequality
(when all exponents are the same). The main result of \cite{archiv} is the
following:

\begin{theorem}
\label{9900}(\cite{archiv}) Let $m\geq 2$ be a positive integer.

(a) If $\left( r,p\right) \in \left( \lbrack 1,2]\times \lbrack 2,2m)\right)
\cup \left( \lbrack 1,+ \infty )\times \lbrack 2m,+ \infty ]\right) $, then
there is a constant $D_{m,r,p}^{\mathbb{K}}\geq 1$ such that
\begin{equation*}
\left( \sum_{i_{1},\dots ,i_{m}=1}^{n}\left\vert T(e_{i_{1}},\dots
,e_{i_{m}})\right\vert ^{r}\right) ^{\frac{1}{r}}\leq D_{m,r,p}^{\mathbb{K}%
}n^{\max \{\frac{2mr+2mp-mpr-pr}{2pr},0\}}\Vert T\Vert
\end{equation*}%
for all $m$-linear forms $T:\ell _{p}^{n}\times \cdots \times \ell
_{p}^{n}\rightarrow \mathbb{K}$ and all positive integers $n$. Moreover, the
exponent $\max \{\left( 2mr+2mp-mpr-pr\right) /2pr,0\}$ is optimal.

(b) If $(r,p)\in \lbrack 2,+ \infty )\times (m,2m]$, then there is a
constant $D_{m,r,p}^{\mathbb{K}}\geq 1$ such that
\begin{equation*}
\left( \sum_{i_{1},\dots ,i_{m}=1}^{n}\left\vert T(e_{i_{1}},\dots
,e_{i_{m}})\right\vert ^{r}\right) ^{\frac{1}{r}}\leq D_{m,r,p}^{\mathbb{K}%
}n^{\max \{\frac{p+mr-rp}{pr},0\}}\Vert T\Vert
\end{equation*}%
for all $m$-linear forms $T:\ell _{p}^{n}\times \cdots \times \ell
_{p}^{n}\rightarrow \mathbb{K}$ and all positive integers $n$. Moreover, the
exponent $\max \{\left( p+mr-rp\right) /pr,0\}$ is optimal.
\end{theorem}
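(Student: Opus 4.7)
The plan is to prove the upper bounds by choosing, given $(r,p)$, an auxiliary exponent $q$ so that one of the three Hardy--Littlewood type theorems listed in the excerpt (Bohnenblust--Hille, Praciano-Pereira for $p \geq 2m$, or Dimant--Sevilla-Peris for $m < p \leq 2m$) produces an $\ell_r$-type bound \emph{at} $q$ with no $n$-factor, and then absorbing the gap between $q$ and the target $p$ with two elementary tools: the power-mean/H\"older inequality $\|a\|_t \leq N^{1/t - 1/u}\|a\|_u$ for $t \leq u$ applied to the $N = n^m$ coefficients $T(e_{i_1},\dots,e_{i_m})$; and the norm comparison
$$\|T\|_{\ell_q^n \times \cdots \times \ell_q^n} \leq n^{m(1/p - 1/q)}\,\|T\|_{\ell_p^n \times \cdots \times \ell_p^n}, \qquad p \leq q,$$
which follows from $\|x\|_q \leq \|x\|_p \leq n^{1/p-1/q}\|x\|_q$ on $\mathbb{K}^n$. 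Optimality will come from an explicit extremal $m$-linear form in each regime.

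For part (a), rewrite the claimed exponent as $m/r + m/p - (m+1)/2$. If this is $\leq 0$ (so necessarily $p \geq 2m$ and $r \geq r^*_p := 2m/(m+1 - 2m/p)$), apply Praciano-Pereira at $p$ and embed $\ell_{r^*_p} \hookrightarrow \ell_r$. If it is positive, distinguish two subregions. For $p \geq 2m$ (so $r < r^*_p$), apply Praciano-Pereira at $p$ and power-mean down from $\ell_{r^*_p}$ to $\ell_r$, which produces $n^{m(1/r - 1/r^*_p)} = n^{m/r + m/p - (m+1)/2}$. For $p \in [2, 2m)$ with $r \in [1, 2]$, set $p^* := 2m/(m+1 - 2m/r) \geq 2m$; Praciano-Pereira at $p^*$ already gives the $\ell_r$ bound with no $n$-factor (since $r^*_{p^*} = r$), and the norm comparison contributes $n^{m(1/p - 1/p^*)} = n^{m/r + m/p - (m+1)/2}$. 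When $r \leq 2m/(m+1)$ the quantity $p^*$ degenerates, and the same argument is instead run with Bohnenblust--Hille at the anchor $q = \infty$, the norm comparison $\|T\|_{\ell_\infty^n} \leq n^{m/p}\|T\|_{\ell_p^n}$, and a final power-mean step to $\ell_r$; the resulting exponent is identical.

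For part (b), Dimant--Sevilla-Peris is the natural input. When $r \geq p/(p-m)$ the embedding yields exponent $0$. When $r \in [2, p/(p-m))$, set $q := mr/(r-1) \in (m, 2m]$; Dimant--Sevilla-Peris at $q$ gives exactly an $\ell_r$ bound with no $n$-factor (because $q/(q-m) = r$), and the condition $r < p/(p-m)$ is equivalent to $p < q$, so the norm comparison applies and contributes $n^{m(1/p - 1/q)} = n^{1/r + m/p - 1} = n^{(p+mr-rp)/(pr)}$. Optimality in part (a) is checked via the Kahane--Salem--Zygmund $m$-linear form $T_n$ with random $\pm 1$ coefficients: one has $\|T_n\|_{\ell_p^n} \leq Cn^{(m+1)/2 - m/p}$ (up to logarithms) while $(\sum_{\mathbf i}|T_n(e_{\mathbf i})|^r)^{1/r} = n^{m/r}$, giving the ratio $n^{m/r + m/p - (m+1)/2}$. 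Part (b) requires a different extremizer because KSZ yields only $m/r + m/p - (m+1)/2$, which is strictly weaker than the target once $r > 2$; the replacement is the diagonal form $T_n(x^{(1)},\ldots,x^{(m)}) := \sum_i \varepsilon_i x^{(1)}_i \cdots x^{(m)}_i$, whose $\ell_p^n$-norm is $\leq n^{1 - m/p}$ by generalized H\"older with exponent $m$, and whose $\ell_r$-coefficient norm is $n^{1/r}$, producing the ratio $n^{1/r + m/p - 1}$ exactly.

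The main obstacle is the proliferation of subcases in part (a): the common exponent $m/r + m/p - (m+1)/2$ is reached via two (or, for $r \leq 2m/(m+1)$, three) distinct routes depending on where $(r,p)$ sits relative to $p^*$, and the consistency of these routes across the boundary lines must be verified. A secondary subtlety is that the Kahane--Salem--Zygmund extremizer, tight in part (a), is \emph{not} tight in part (b); one has to switch to the diagonal ansatz and check that it really is optimal across the whole strip $r \in [2, p/(p-m))$.
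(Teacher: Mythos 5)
Your argument is correct. Note that the present paper only quotes Theorem \ref{9900} from \cite{archiv} and does not reprove it, so the fair comparison is with the machinery of Sections \ref{sec_proof}--\ref{sec-final}. Your treatment of the regime $p\geq 2m$ (anchor at the Praciano-Pereira exponent $2mp/(mp+p-2m)$ and pay $n^{m(1/r-1/r_p^{*})}$ via the power-mean inequality over the $n^m$ coefficients) is exactly the paper's mechanism --- its H\"older inequality for mixed sums, Theorem \ref{GenHol}, specialised to equal exponents and a matrix of ones --- and your optimality witnesses (Kahane--Salem--Zygmund for (a), the diagonal form $\sum_j x_j^{(1)}\cdots x_j^{(m)}$ for (b)) are precisely the extremizers used in Sections \ref{sec_proof} and \ref{sec-final}. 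What is genuinely different is your second mechanism, the domain-side comparison $\|T\|_{\ell_q^n\times\cdots\times\ell_q^n}\leq n^{m(1/p-1/q)}\|T\|_{\ell_p^n\times\cdots\times\ell_p^n}$ for $p\leq q$, used to shift the anchor from $p$ to the auxiliary $p^{*}$ in part (a) and to $q=mr/(r-1)$ in part (b). This is not cosmetic: for $p<2m$ no $n$-free Hardy--Littlewood anchor is available at $p$ itself throughout the range, and where Dimant--Sevilla-Peris does apply at $p$ the coefficient-side power-mean step overshoots (in part (a) it yields $m/r-m+m^2/p$, exceeding the claimed exponent by $(m-1)(m/p-1/2)>0$), so transferring the domain is unavoidable; choosing the auxiliary exponent so that the anchor's output lands exactly at $\ell_r$ is the right move and is what \cite{archiv} itself does. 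Two minor points worth writing out in a final version: the Mantero--Tonge form quoted here carries no logarithmic loss, so your ``up to logarithms'' caveat can be dropped; and at the boundary $r=2m/(m+1)$ your Praciano-Pereira-at-$p^{*}$ route has $p^{*}=\infty$ and literally coincides with your Bohnenblust--Hille route, which settles the consistency issue you flagged.
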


\begin{remark}
\label{7654} Theorem \ref{9900} recovers the famous Bohnenblust--Hille
inequality (see \cite{bh}) when $r=\frac{2m}{m+1}$ and $p=+ \infty $. When $%
r=\frac{2mp}{mp+p-2m}$ and $p\geq 2m$ it is recovered the Hardy--Littlewood
/ Praciano-Pereira inequality (see \cite{hl, pp}). For $r=\frac{p}{p-m}$ and
$m<p<2m$ we get the Hardy--Littlewood / Dimant--Sevilla-Peris inequality
(see \cite{hl,dim}).
\end{remark}


\begin{remark}
\label{APps} It is worth noting that item $(a)$ of the previous result holds
for a more general situation: since H\"{o}lder's inequality is still valid
for exponents between $0$ and $1$, we may have $r \in (0,1)$; also,
following the lines of \cite{archiv}, we may consider a version with
different values of $p$. More precisely, theorem \ref{9900} item (a) may be
read as follows: let $m\geq 2$ be a positive integer and $\mathbf{p}= (p_1,
\ldots , p_m)$. If $(r,\mathbf{p}) \in(0,2]\times [2,2m)^m \cup (0,+\infty)
\times [2m,+ \infty]^m$, then there is a constant $D_{m,r,\mathbf{p}}^{%
\mathbb{K}} \geq 1$ (not depending on $n$) such that
\begin{equation*}
\left( \sum_{j_{1},\ldots ,j_{m}=1}^{n}\left\vert T(e_{j_{1}},\ldots
,e_{j_{m}})\right\vert ^{r}\right) ^{\frac{1}{r}} \leq D_{m,r,\mathbf{p}}^{%
\mathbb{K}} \cdot n^{ \max \left\{\frac{m}{r} - \frac{m+1}{2} + \left| \frac{%
1}{\mathbf{p}} \right|,0\right\} }\Vert T\Vert,
\end{equation*}
for all $m$-linear forms $T:\ell_{p_{1}}^{n}\times \cdots \times
\ell_{p_{m}}^{n} \to \mathbb{K}$ and all positive integers $n$. Moreover,
the exponent $\max\left\{\frac{m}{r} - \frac{m+1}{2} + \left| \frac{1}{%
\mathbf{p}}\right|,0 \right\}$ is sharp. In this situation, as expected, the
exponent goes to $+\infty $ when $r$ goes to $0$.
\end{remark}

\bigskip

For the sake of clarity we fix the following notation: for $%
\rho,r_1,\dots,r_m \in (0,+ \infty)$, let us define $M_{<}^{\rho}
:=\{j\in\{1,\dots,m\}\,:\, r_j < \rho\} , \, M_{\geq}^{\rho} :=
\{1,\dots,m\} \setminus M_{<}^{\rho}$. We shall denote by $\left|
M_{<}^{\rho} \right|$ the cardinality of $M_{<}^{\rho}$. A particular case
of $\rho$ will be useful: $M_<^{\text{HL}}:= M_<^{2m/(m+1-2\left|\frac 1{%
\mathbf{p}}\right|) }$.

\begin{theorem}
\label{thm_uni} Let $m\geq 2$ be an integer, $\mathbf{p}:=(p_1, \ldots, p_m)
\in [2, + \infty]^{m}$, and $\mathbf{r} := \left( r_{1},\dots,r_{m} \right)
\in (0,+ \infty)^m$. There exists a constant $D_{m,\mathbf{p}}^{\mathbb{K}%
}\geq 1$ such that for all positive integer $n$ and for all bounded $m$%
-linear forms $T:\ell _{p_1}^{n}\times \cdots \times \ell_{p_m}^{n} \to
\mathbb{K}$ the following inequality holds
\begin{equation}
\left( \sum_{i_{1}=1}^{n} \left( \dots \left( \sum_{i_{m}=1}^{n} \left\vert
T(e_{i_{1}},\dots ,e_{i_{m}})\right\vert^{r_{m}} \right) ^{\frac{r_{m-1}}{
r_{m}}}\dots \right) ^{\frac{r_{1}}{r_{2}}}\right) ^{\frac{1}{r_{1}}} \leq
D_{m, \mathbf{p}}^{\mathbb{K}} n^s \Vert T\Vert ,  \label{thm_uni_ineq}
\end{equation}
where the exponent $s$ is given by:

\begin{enumerate}
\item If $\mathbf{p} \in [2, 2m]^m $ then, $s=\displaystyle \sum_{j \in
M_{<}^{2}} \frac{1}{ r_j}+ \left| \frac{1}{\mathbf{p}} \right| - \frac{1}{2}%
\left(|M_<^{2}|+1 \right)$. When $M_{<}^{2}= \{1,\dots,m\}$ the exponent $s$
is optimal.

\item If $\left|\frac{1}{\mathbf{p}}\right|\leq \frac 12$ (in particular, if
$\mathbf{p} \in [2m, +\infty]^m$) then, $s=\displaystyle\sum_{j \in M_{<}^{%
\text{HL}}} \frac{1}{r_j} - \frac{m+1-2\left| \frac{1}{\mathbf{p}} \right|}{%
2m} \cdot |M_<^{\text{HL}}|$. When $M_{<}^{\text{HL}} = \{1,\dots,m\}$ or $%
M_{<}^{\text{HL}} = \emptyset$ the exponent $s$ is optimal.
\end{enumerate}
\end{theorem}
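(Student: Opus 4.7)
The overall plan is a threshold-plus-Hölder reduction. I would fix a threshold exponent $\rho$: take $\rho=2$ in case (1), and $\rho=2m/\bigl(m+1-2\left|\frac{1}{\mathbf{p}}\right|\bigr)$ (the Praciano--Pereira exponent) in case (2). The uniform vector $(\rho,\ldots,\rho)$ is then exactly the one for which a Hardy--Littlewood-type bound is available: Theorem \ref{main111} applies directly in case (2), while in case (1), where $\mathbf{p}\in[2,2m]^m$ forces $\left|\frac{1}{\mathbf{p}}\right|\geq 1/2$, one only has the weaker bound of Remark \ref{APps} with an explicit factor $n^{\left|\frac{1}{\mathbf{p}}\right|-1/2}$.

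To pass from this uniform $\rho$-norm to the mixed $\mathbf{r}$-norm I would apply Theorem \ref{GenHol} to $N=2$ scalar matrices $a^{(1)}_{\mathbf{i}}:=T(e_{i_1},\ldots,e_{i_m})$ and $a^{(2)}_{\mathbf{i}}\equiv 1$, with $q_j(1):=\max\{r_j,\rho\}$ and $q_j(2)$ defined by $1/r_j=1/q_j(1)+1/q_j(2)$. Thus $q_j(2)=\infty$ on $M_{\geq}^{\rho}$ and $1/q_j(2)=1/r_j-1/\rho$ on $M_{<}^{\rho}$, so the all-ones factor equals $\prod_{j\in M_{<}^{\rho}}n^{1/r_j-1/\rho}$. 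Since every $q_j(1)\geq\rho$, the coordinate-wise inclusions $\ell_{q_j(1)}^{n}\hookrightarrow\ell_\rho^{n}$ bound the other factor by $\|a^{(1)}\|_{(\rho,\ldots,\rho)}$, and combining everything yields the two stated formulas after a brief arithmetic check. For instance, in case (1),
\[
\sum_{j\in M_{<}^{2}}\Bigl(\frac{1}{r_j}-\frac{1}{2}\Bigr)+\left|\frac{1}{\mathbf{p}}\right|-\frac{1}{2}=\sum_{j\in M_{<}^{2}}\frac{1}{r_j}+\left|\frac{1}{\mathbf{p}}\right|-\frac{|M_{<}^{2}|+1}{2},
\]
and case (2) collapses analogously since $m/\rho=(m+1)/2-\left|\frac{1}{\mathbf{p}}\right|$.

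For sharpness, the subcase $M_{<}^{\text{HL}}=\emptyset$ of (2) reduces immediately to the known optimality of the Hardy--Littlewood exponent in Theorem \ref{main111}. The remaining extreme subcases $M_{<}^{\rho}=\{1,\ldots,m\}$ require a lower-bound construction: I would invoke a Generalized Kahane--Salem--Zygmund inequality to produce, for each $n$, an $m$-linear form on $\ell_{p_1}^{n}\times\cdots\times\ell_{p_m}^{n}$ whose coefficients have roughly constant modulus on all $n^{m}$ multi-indices while its operator norm is controlled by an explicit power of $n$ involving $\left|\frac{1}{\mathbf{p}}\right|$; inserting such a test form into \eqref{thm_uni_ineq} and comparing exponents of $n$ shows that no smaller $s$ is admissible.

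The main obstacle is the sharpness argument. The upper bound becomes essentially routine once one sees the reduction to the uniform $\rho$-vector and applies Theorem \ref{GenHol} together with Theorem \ref{main111} or Remark \ref{APps}; but matching $s$ from below requires carefully tuning a Kahane--Salem--Zygmund random form so that its norm on the product space $\ell_{p_1}^{n}\times\cdots\times\ell_{p_m}^{n}$ is controlled with the right power of $n$, and then propagating the coefficient lower bound through the iterated mixed summation without losing the optimal dependence on $|M_{<}^{\rho}|$ and $\left|\frac{1}{\mathbf{p}}\right|$.
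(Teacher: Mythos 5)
Your proposal is correct and follows essentially the same route as the paper: split the coefficient matrix against an all-ones matrix via the mixed-norm H\"older inequality (Theorem \ref{GenHol}), pass to the uniform threshold exponent ($2$ or the Hardy--Littlewood exponent) by the $\ell_p$ inclusions, invoke Remark \ref{APps} with $r=2$ in case (1) and the classical Hardy--Littlewood/Praciano-Pereira bound in case (2), and prove sharpness in the extreme cases with the Kahane--Salem--Zygmund form. The only cosmetic differences are that you phrase the H\"older step with $q_j(1)=\max\{r_j,\rho\}$ rather than reordering so that $M_<^{\rho}=\{1,\dots,k\}$, and note that the inclusion you use is $\Vert\cdot\Vert_{q_j(1)}\le\Vert\cdot\Vert_{\rho}$ for $q_j(1)\ge\rho$ (your arrow is written in the opposite direction, but the inequality you apply is the right one).
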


Note that when $p_1=\cdots=p_m=2m$ in (1) and (2), the sets $M_<^{HL}=M_<^2$
coincide and so do both exponents $s$.

The paper is organized as follows. In Section \ref{sec_proof} we prove
Theorem \ref{thm_uni} and leave some open problems to the interested reader.
In Sections \ref{sec_gen} and \ref{sec-final} we present a more general
approach which provides slightly more general results than certain parts of
Theorem \ref{thm_uni}.

\bigskip

\section{The proof of Theorem \protect\ref{thm_uni}}

\label{sec_proof}

Let us recall a generalization of the Kahane-Salem-Zygmund inequality (see
\cite{alb, maton}) that will be crucial to prove the optimality of the
exponents:

\begin{KSZ}
Let $m,n\geq 1,\,(p_1, \ldots , p_m)\in \left[ 1,+ \infty \right]^m $ and
let us define
\begin{equation*}
\alpha (p):=
\begin{cases}
\displaystyle\frac{1}{2}-\frac{1}{p}\,,\text{if }p\geq 2; \\
0\,,\text{otherwise}.%
\end{cases}%
\end{equation*}
Then there exist an universal constant $C_{m}$ (depending only on $m$) and a
$m$-linear map $A:\ell _{p_1}^{n}\times \cdots \times \ell
_{p_m}^{n}\rightarrow \mathbb{K}$ of the form
\begin{equation*}
A\left( z^{(1)},\ldots ,z^{(m)}\right) =\sum_{i_{1},\ldots ,i_{m}=1}^{n}\pm
z_{i_{1}}^{(1)}\cdots z_{i_{m}}^{(m)},
\end{equation*}
such that
\begin{equation*}
\Vert A\Vert \leq C_{m}\cdot n^{\frac{1}{2}+\alpha (p_1)+ \cdots + \alpha
(p_m)}.
\end{equation*}
\end{KSZ}

\bigskip

From now on, $T:\ell _{p_1}^{n}\times \dots \times \ell_{p_m}^{n}\rightarrow
\mathbb{K}$ is an $m$-linear form.

\bigskip

\subsection{Case $\mathbf{p} \in [2,2m]^m$}

Let us suppose that $M^2_<$ is non-empty. For the sake of clarity we shall
assume that $M^2_< := \{ 1,\dots, k\}$. From Remark \ref{APps} for $r=2$, we
have
\begin{equation*}
\left( \sum_{i_1,\dots, i_m =1}^{n} |T(e_{i_1},\dots, e_{i_m})|^2 \right)^{%
\frac{1}{2}} \leq C n^{\left| \frac{1}{\mathbf{p}} \right|-\frac{1}{2}}.
\end{equation*}
Let $x_1, \ldots , x_k$ be such that
\begin{equation*}
\frac{1}{r_i} = \frac{1}{2} + \frac{1}{x_i} \qquad \text{for all } i=1,
\ldots , k.
\end{equation*}
Using H\"{o}lder's inequality for mixed sums (Theorem \ref{GenHol}) and the
classical inclusion for $\ell_p$ spaces we have
\begin{align*}
& \left( \sum_{i_{1}=1}^{n}\left( \dots \left( \sum_{i_{m}=1}^{n}\left\vert
T(e_{i_{1}},\dots ,e_{i_{m}})\right\vert ^{r_{m}}\right) ^{\frac{r_{m-1}}{%
r_{m}}}\dots \right) ^{\frac{r_{1}}{r_{2}}}\right) ^{\frac{1}{r_{1}}} \\
& \leq \left( \sum_{i_{1},\ldots ,i_{k}=1}^{n}\left( \sum_{i_{k+1}}^{n}\dots
\left( \sum_{i_{m}=1}^{n}\left\vert T(e_{i_{^{1}}},\ldots
,e_{i_{m}})\right\vert ^{r_{m}}\right) ^{\frac{r_{m-1}}{r_{m}}}\dots \right)
^{\frac{1}{r_{k+1}}2}\right) ^{\frac{1}{2}}\cdot \left(
\sum_{i_{1}=1}^{n}\left( \dots \left( \sum_{i_{k}=1}^{n}1^{x_{k}}\right) ^{%
\frac{x_{k-1}}{x_{k}}}\dots \right) ^{\frac{x_{1}}{x_{2}}}\right) ^{\frac{1}{%
x_{1}}} \\
& \leq\left( \sum_{i_{1},\ldots ,i_{m}=1}^{n}\left\vert
T(e_{i_{^{1}}},\ldots ,e_{i_{m}})\right\vert ^{2}\right) ^{\frac{1}{2}}\cdot
n^{\frac{1}{x_{1}}+\dots +\frac{1}{x_{k}}} \\
& \leq D_{m,(2,\ldots,2),\mathbf{p}}^{\mathbb{K}}\Vert T\Vert \cdot
n^{\left| \frac{1}{\mathbf{p}} \right|-\frac{1}{2}}\cdot n^{\frac{1}{x_{1}}%
+\dots +\frac{1}{x_{k}}} \\
& =D_{m,(2,\ldots,2),\mathbf{p}}^{\mathbb{K}}\Vert T\Vert \cdot n^{\left|
\frac{1}{\mathbf{p}} \right|-\frac{1}{2}+\frac{1}{r_{1}}+\cdots +\frac{1}{%
r_{k}}-\frac{k}{2}} \\
& =D_{m,(2,\ldots,2),\mathbf{p}}^{\mathbb{K}}\Vert T\Vert \cdot
n^{\sum_{j\in M_{\,<}^{2}}\frac{1}{r_{j}}+\left| \frac{1}{\mathbf{p}}
\right|-\frac{1}{2}\left(|M_<^{2}|+1\right)}.
\end{align*}

\bigskip Now we prove the optimality. Let us consider the $k$-linear form $%
A_k: \ell_{p_1}^n \times \dots \times \ell_{p_k}^n \to \mathbb{K}$ from the
Kahane-Salem-Zygmund inequality given by
\begin{equation*}
A_k(z^{(1)},\dots, z^{(k)}) = \sum_{i_1,\dots, i_k =1}^{n} \pm z^{(1)}_{i_1}
\dots z^{(k)}_{i_k}
\end{equation*}
which fulfils
\begin{equation*}
\|A_k\| \leq C_k n^{\frac{k+1}{2}-\left( \frac{1}{p_1}+ \cdots + \frac{1}{p_k%
}\right)}.
\end{equation*}
Let us define the $m$-linear form $B_m: \ell_{p_1}^n \times \dots \times
\ell_{p_m}^n \to \mathbb{K}$ by
\begin{equation*}
B_m(z^{(1)}, \ldots , z^{(m)}) = A_k(z^{(1)}, \ldots , z^{(k)})z^{(k+1)}_1
\dots z^{(m)}_1.
\end{equation*}
Clearly $\|B_m\|=\|A_k\|$. Also notice that
\begin{align*}
& \left( \sum_{i_{1}=1}^{n}\left( \dots \left( \sum_{i_{m}=1}^{n}\left\vert
B_m(e_{i_{1}},\dots ,e_{i_{m}})\right\vert ^{r_{m}}\right) ^{\frac{r_{m-1}}{
r_{m}}}\dots \right) ^{\frac{r_{1}}{r_{2}}}\right) ^{\frac{1}{r_{1}}} \\
& \quad = \left( \sum_{i_{1}=1}^{n}\left( \dots \left(
\sum_{i_{k}=1}^{n}\left\vert A_k(e_{i_{1}},\dots ,e_{i_{k}})\right\vert
^{r_{k}}\right) ^{\frac{r_{k-1}}{ r_{k}}}\dots \right) ^{\frac{r_{1}}{r_{2}}%
}\right) ^{\frac{1}{r_{1}}} \\
& \quad =n^{ \frac{1}{r_{1}}+\dots +\frac{1}{r_{k}}}.
\end{align*}
Let us suppose the result holds for some exponent $t>0$. Then,
\begin{equation*}
\left( \sum_{i_{1}=1}^{n}\left( \dots \left( \sum_{i_{m}=1}^{n}\left\vert
B_m(e_{i_{1}},\dots ,e_{i_{m}})\right\vert ^{r_{m}}\right) ^{\frac{r_{m-1}}{
r_{m}}}\dots \right) ^{\frac{r_{1}}{r_{2}}}\right) ^{\frac{1}{r_{1}}}\leq C
\Vert B_m\Vert \cdot n^{t}.
\end{equation*}%
Since $p_j\geq 2$ for all $j = 1,\dots,k$ we have
\begin{equation*}
n^{\frac{1}{r_{1}}+\dots +\frac{1}{r_{k}}} \leq C \Vert B_m\Vert n^{t} \leq
C C_{k} n^{t+\frac{k+1}{2}-\left(\frac{1}{p_1}+ \cdots + \frac{1}{p_k}%
\right)}.
\end{equation*}
Thus, we obtain a lower bound for the exponent $t$ that fulfills the result:
\begin{equation*}
t \geq \frac{1}{r_{1}}+\dots +\frac{1}{r_{k}} + \left(\frac{1}{p_1}+ \cdots
+ \frac{1}{p_k}\right)-\frac{k+1}{2} = \sum_{j\in M_{\,<}^{2}}\frac{1}{r_{j}}
+ \sum_{j\in M_{\,<}^{2}}\frac{1}{p_{j}} - \frac{|M_<^{2}|+1}{2}.
\end{equation*}
When $M_{<}^{2}= \{1,\dots,m\}$, this lower bound coincides with the
exponent we obtained before:
\begin{equation*}
s=\sum_{j=1}^{m} \frac{1}{r_j}+ \left| \frac{1}{\mathbf{p}} \right| - \frac{%
m+1}{2}
\end{equation*}
and, therefore, we gain the optimality for the exponent in this situation.

\bigskip

\bigskip

\subsection{Case $\left|\frac{1}{\mathbf{p}} \right| \leq \frac12$}

Similarly to the previous cases, we may suppose that $M^\text{HL}_< := \{
1,\dots, k\}$ is non-empty. Let $x_{1},\dots,x_{k} > 0$ be such that
\begin{equation*}
\frac{1}{r_{i}} =\frac{m+1-2 \left|\frac{1}{\mathbf{p}} \right|}{2m}+\frac{1%
}{x_{i}}, \ \ \ \text{ for all } \ i=1,\dots,k.
\end{equation*}

Write $\rho_{\text{{\tiny HL}}}:=2m/(m+1-2| \frac{1}{\mathbf{p}} | )$. Using
H\"{o}lder's inequality for mixed $\ell _{p}$ spaces (Theorem \ref{GenHol}),
the canonical inclusion of $\ell _{p}$ spaces and the classical
Hardy--Littlewood's inequality, we have
\begin{align*}
& \left( \sum_{i_{1}=1}^{n}\left( \dots \left( \sum_{i_{m}=1}^{n}\left\vert
T(e_{i_{1}},\dots ,e_{i_{m}})\right\vert ^{r_{m}}\right) ^{\frac{r_{m-1}}{
r_{m}}}\dots \right) ^{\frac{r_{1}}{r_{2}}}\right) ^{\frac{1}{r_{1}}} \\
&\leq \left( \sum_{i_{1},\ldots ,i_{k}=1}^{n} \left( \sum_{i_{k+1}=1}^{n}
\left( \dots \left( \sum_{i_{m}=1}^{n} \left\vert T(e_{i_{1}},\dots
,e_{i_{m}})\right\vert ^{r_{m}} \right)^{\frac{r_{m-1}}{r_{m}}} \dots
\right)^{\frac{r_{k+1}}{r_{k+2}}} \right) ^{\frac{\rho}{r_{k+1}}} \right)^%
\frac{1}{\rho} \times \\
& \times \left( \sum_{i_{1}=1}^{n}\left( \dots \left(
\sum_{i_{k}=1}^{n}1^{x_{k}}\right) ^{\frac{x_{k-1}}{x_{k}}}\dots \right) ^{
\frac{x_{1}}{x_{2}}}\right) ^{\frac{1}{x_{1}}} \\
& \leq \left( \sum_{i_{1},\ldots ,i_{m}=1}^{n}\left\vert
T(e_{i_{^{1}}},\ldots ,e_{i_{m}})\right\vert^{\frac{2m} {m+1-2 \left| \frac{1%
} {\mathbf{p}} \right|}} \right)^{\frac{m+1-2\left|\frac{1}{\mathbf{p}}%
\right|}{2m}}\cdot n^{\frac{1}{x_{1}}+\dots +\frac{1}{x_{k}}} \\
& \leq D_{m,(\rho_{\text{{\tiny HL}}}, \dots, \rho_{\text{{\tiny HL}}}),%
\mathbf{p}}^{\mathbb{K}} \Vert T\Vert \cdot n^{\frac{1}{r_{1}}+\dots + \frac{%
1}{r_{k}} - \frac{m+1-2\left|\frac{1}{\mathbf{p}}\right|}{2m}k} \\
& \leq D_{m,(\rho_{\text{{\tiny HL}}}, \dots, \rho_{\text{{\tiny HL}}}),%
\mathbf{p}}^{\mathbb{K}} \Vert T\Vert \cdot n^{\sum_{j\in M_{<}^{\text{HL}}}%
\frac{1}{r_j} - \frac{m+1-2\left|\frac{1}{\mathbf{p}}\right|}{2m} \cdot
|M_<^{\text{HL}}|}.
\end{align*}

\bigskip

The optimality of the case $M_{<}^{\text{HL}} = \{1,\dots,m\}$, which has
exponent
\begin{equation*}
s = \sum_{j=1}^{m} \frac{1}{r_j} +\left|\frac{1}{\mathbf{p}}\right| - \frac{%
m+1}{2},
\end{equation*}
follows by the same argument of the previous item (a): a standard use of the
$m$-linear form from the Kahane--Salem--Zygmund inequality. When $M_{<}^{%
\text{HL}} = \emptyset$, we have $s=0$. Thus the optimality of the result is
obvious since it is immediate that the inequality (\ref{thm_uni_ineq}) does
not hold if $n^s$ is replaced by $n^{t}$ with $t<0$.

\begin{remark}
Maybe the lack of optimality in the above results is a lack of (to the best
to the author's knowledge) a Kahane--Salem--Zygmund type inequality for this
context. The optimality or not of the estimates of the previous propositions
are, in our opinion, interesting open problems. In the next section, we will
give a different approach to the case $\left|\frac{1}{\mathbf{p}}\right|\leq
\frac 12$ in Theorem \ref{thm_uni}(2), and get an inequality similar to (\ref%
{thm_uni_ineq}) with optimal exponents $s$.
\end{remark}

\section{Getting optimality}

\label{sec_gen}

We have seen in the previous section that in the case $\left|\frac{1}{%
\mathbf{p}}\right|\leq \frac 12$ we get optimality of the exponent $s$ from (%
\ref{thm_uni_ineq}) in the extreme cases $M_{<}^{\text{HL}} = \{1,\dots,m\}$
or $M_{<}^{\text{HL}} = \emptyset$. Let us see that we can also get
optimality in the intermediates cases $0<|M_{<}^{\text{HL}}|<m$ for a
similar inequality (given in Theorem \ref{alt}). The price we have to pay in
order to gain optimality in these intermediate cases is double. First, the
result is stated just for $r_1,\ldots,r_m$ in the interval $[1,2]$ and not
for all $r_1,\ldots,r_m>0$. Second, the constant appearing in (\ref{opt})
depends a priori of the exponents $r_1,\ldots,r_m$. Recall that the constant
$D_{m,\mathbf{p}}^{\mathbb{K}}$ that comes from (\ref{thm_uni_ineq}) does
not depend on the $r_i$'s.

We begin with an elementary lemma:

\begin{lemma}
\label{8800}Let $r_{1},\ldots ,r_{m}\in \left( 0,2\right] $, $m\geq 2$ be a
positive integer and $\left|\frac{1}{\mathbf{p}}\right|\leq \frac{1}{2}$. If%
\begin{equation*}
\frac{1}{r_{1}}+\cdots +\frac{1}{r_{m}}>\frac{m+1}{2}-\left|\frac{1}{\mathbf{%
p}}\right|,
\end{equation*}%
then there are $s_{1},\ldots ,s_{m}\in \left[\left( 1-\left| \frac{1}{%
\mathbf{p}} \right| \right)^{-1},2\right]$ such that $s_{j}\geq r_{j}$ for
all $j=1,\ldots ,m$ and%
\begin{equation*}
\frac{1}{s_{1}}+\cdots +\frac{1}{s_{m}}=\frac{m+1}{2}-\left|\frac{1}{\mathbf{%
p}}\right|.
\end{equation*}
\end{lemma}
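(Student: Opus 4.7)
The plan is to pass to reciprocals. Write $a := |\tfrac{1}{\mathbf{p}}|$ and $y_j := 1/s_j$. The constraints $s_j \in [(1-a)^{-1},2]$, $s_j \geq r_j$, and $\sum 1/s_j = (m+1)/2 - a$ become, respectively, $y_j \in [\tfrac{1}{2},1-a]$, $y_j \leq 1/r_j$, and $\sum y_j = (m+1)/2 - a$. Collapsing the first two, I look for $(y_1,\ldots,y_m)$ in the compact box $B := \prod_j I_j$ with $I_j := [\tfrac{1}{2}, \min(1/r_j,1-a)]$ such that $\sum y_j = (m+1)/2 - a$; then $s_j := 1/y_j$ will solve the lemma. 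Each $I_j$ is nonempty, since $r_j \leq 2$ forces $1/r_j \geq 1/2$ and $a \leq 1/2$ forces $1-a \geq 1/2$.

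Because the linear map $(y_j) \mapsto \sum y_j$ is continuous on the box $B$, its image is precisely the closed interval $\bigl[\tfrac{m}{2},\ \sum_{j=1}^m \min(1/r_j, 1-a)\bigr]$ (every intermediate value is attained by moving coordinates one at a time along the segment between the lower and upper corners and invoking the intermediate value theorem). So it suffices to place the target $(m+1)/2 - a$ inside this interval. The left endpoint check $\tfrac{m}{2} \leq (m+1)/2 - a$ is equivalent to $a \leq \tfrac{1}{2}$, which holds by hypothesis.

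For the right endpoint, I split on the number $k$ of indices $j$ with $r_j < (1-a)^{-1}$. When $k = 0$ we have $\min(1/r_j,1-a) = 1/r_j$ for every $j$, and the required bound is exactly the hypothesis $\sum 1/r_j > (m+1)/2 - a$. When $k \geq 1$, the truncation turns those $k$ coordinates into $1-a$; using $1/r_j \geq 1/2$ on the remaining $m-k$ coordinates (from $r_j \leq 2$) yields $\sum_j \min(1/r_j,1-a) \geq k(1-a) + (m-k)/2$, and after rearranging the desired bound reduces cleanly to $k(\tfrac{1}{2} - a) \geq \tfrac{1}{2} - a$, which holds for every $k \geq 1$ (and is automatic when $a = \tfrac{1}{2}$). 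The main subtlety sits here: when some $r_j$ are very small, the truncation $\min(1/r_j,1-a)$ discards potentially large values of $1/r_j$, so the hypothesis on $\sum 1/r_j$ does not transfer directly; the saving grace is exactly the assumption $r_j \leq 2$, which keeps the untruncated coordinates large enough to compensate.
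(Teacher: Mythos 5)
Your argument is correct. Passing to reciprocals $y_j=1/s_j$ and observing that the constraints define the box $\prod_j[\tfrac12,\min(1/r_j,1-a)]$ (nonempty precisely because $r_j\le 2$ and $a=|\tfrac{1}{\mathbf{p}}|\le\tfrac12$), you reduce the lemma to checking that the target $\tfrac{m+1}{2}-a$ lies between the sums of the lower and upper corners, and both endpoint verifications go through: the left one is exactly $a\le\tfrac12$, and the right one follows from the hypothesis when no $r_j$ falls below $(1-a)^{-1}$ and from the elementary inequality $k(\tfrac12-a)\ge\tfrac12-a$ otherwise. The paper proves the same statement with the same basic tool (an intermediate value argument) but organizes it differently: it first disposes of the case where some $r_{j_0}\le(1-a)^{-1}$ by an explicit choice ($s_j=2$ for $j\ne j_0$ and $s_{j_0}=(1-a)^{-1}$), and in the remaining case runs a sequential procedure, raising the $r_j<2$ to $2$ one index at a time until the reciprocal sum first drops to the target, then applying the intermediate value theorem to that single coordinate. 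Your global connectedness argument on the product of intervals is cleaner and unifies the two cases of the paper into one construction, at the mild cost of being less explicit about which $s_j$ are actually produced; the paper's version hands you concrete values (all but one coordinate equal to $2$ or to the original $r_j$), which is occasionally convenient but not needed anywhere in the sequel. Your closing remark correctly identifies the one genuine subtlety --- that truncating $1/r_j$ at $1-a$ could in principle destroy the hypothesis on $\sum 1/r_j$, and that $r_j\le 2$ is what saves the day --- which is exactly the role the assumption $r_j\le 2$ plays in the paper's first case as well.
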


\begin{proof}
Note that since $\left|\frac{1}{\mathbf{p}}\right| \leq \frac{1}{2}$ we have
\begin{equation*}
1<\frac{1}{1-\left|\frac{1}{\mathbf{p}}\right|}\leq 2.
\end{equation*}%
We divide the proof in two cases: \newline
\textbf{First case. }Suppose that $r_{j_{0}}\leq \left( 1-\left| \frac{1}{%
\mathbf{p}} \right| \right)^{-1}$ for some $j_{0}.$ In this case we define%
\begin{equation*}
s_{j}=2\text{ for all }j\neq j_{0}
\end{equation*}%
and
\begin{equation*}
s_{j_{0}}=\frac{1}{1-\left|\frac{1}{\mathbf{p}}\right|}.
\end{equation*}

\noindent \textbf{Second case. }Suppose that $r_{j}>\left( 1-\left\vert
\frac{1}{\mathbf{p}}\right\vert \right) ^{-1}$ for all $j=1,\dots ,m$. Note
first that not all the $r_{j}$'s are $2$. Otherwise, we have $\frac{m}{2}>%
\frac{m+1}{2}-\left\vert \frac{1}{\mathbf{p}}\right\vert $, which
contradicts $\left\vert \frac{1}{\mathbf{p}}\right\vert \leq \frac{1}{2}$.
Set $N:=\{j\in \{1,\ldots ,m\}:r_{j}<2\}$. So, $r_{j}=2$ for all $j\notin N$%
. If we replace every $r_{j},j\in N$, by $2$ then
\begin{equation*}
\frac{m}{2}=\sum_{k\notin N}\frac{1}{r_{k}}+\sum_{k\in N}\frac{1}{2}\leq
\frac{m+1}{2}-\left\vert \frac{1}{\mathbf{p}}\right\vert .
\end{equation*}%
Let $j_{0}$ be the minimum $j\in N$ such that
\begin{equation*}
\sum_{k\notin N}\frac{1}{r_{k}}+\sum_{\overset{k\in N}{k<j_{0}}}\frac{1}{2}+%
\frac{1}{2}+\sum_{\overset{k\in N}{k>j_{0}}}\frac{1}{r_{k}}\leq \frac{m+1}{2}%
-\left\vert \frac{1}{\mathbf{p}}\right\vert .
\end{equation*}%
Then,
\begin{equation*}
\sum_{k\notin N}\frac{1}{r_{k}}+\sum_{\overset{k\in N}{k<j_{0}}}\frac{1}{2}+%
\frac{1}{r_{j_{0}}}+\sum_{\overset{k\in N}{k>j_{0}}}\frac{1}{r_{k}}>\frac{m+1%
}{2}-\left\vert \frac{1}{\mathbf{p}}\right\vert .
\end{equation*}%
By an Intermediate Value argument, there exists $\delta _{j_{0}}>0$ such
that $s_{j_{0}}:=r_{j_{0}}+\delta _{j_{0}}\in \left[ \left( 1-\left\vert
\frac{1}{\mathbf{p}}\right\vert \right) ^{-1},2\right] $ and
\begin{equation*}
\sum_{k\notin N}\frac{1}{r_{k}}+\sum_{\overset{k\in N}{k<j_{0}}}\frac{1}{2}+%
\frac{1}{r_{j_{0}}+\delta _{j_{0}}}+\sum_{\overset{k\in N}{k>j_{0}}}\frac{1}{%
r_{k}}=\frac{m+1}{2}-\left\vert \frac{1}{\mathbf{p}}\right\vert .
\end{equation*}%
Hence, we take the $s_{j}$'s as follows:
\begin{equation*}
s_{j}:=%
\begin{cases}
r_{j_{0}}+\delta _{j_{0}}, & \text{if }j=j_{0}; \\
2, & \text{if }j\in N\text{ is such that }j<j_{0}; \\
r_{j}, & \text{if }j\notin N \text{ or }j\in N\text{ is such that }j>j_{0}.%
\end{cases}%
\end{equation*}
\end{proof}

\bigskip

As we have anticipated, the following theorem complements part (2) of
Theorem \ref{thm_uni}:

\bigskip

\begin{theorem}
\label{alt} Let $m\geq 2$ be an integer. If $\left|\frac{1}{\mathbf{p}}%
\right|\leq \frac{1}{2}$ and $\mathbf{r} \in [1,2]^m$, then there is a
constant $D_{m,\mathbf{r},\mathbf{p}}^{\mathbb{K}}\geq 1$ (not depending on $%
n$) such that
\begin{equation}  \label{opt}
\left( \sum_{i_{1}=1}^{n}\left( \dots \left( \sum_{i_{m}=1}^{n}\left\vert
T(e_{i_{1}},\dots ,e_{i_{m}})\right\vert ^{r_{m}}\right) ^{\frac{r_{m-1}}{%
r_{m}}}\dots \right) ^{\frac{r_{1}}{r_{2}}}\right) ^{\frac{1}{r_{1}}} \leq
D_{m, \mathbf{r}, \mathbf{p}}^{\mathbb{K}} \cdot n^{\max \left\{ \left|
\frac{1}{\mathbf{r}} \right| -\frac{m+1}{2} + \left| \frac{1}{\mathbf{p}}
\right|, 0\right\}} \Vert T\Vert,
\end{equation}%
for all $m$-linear forms $T:\ell _{p_1}^{n}\times \cdots \times
\ell_{p_m}^{n} \to \mathbb{K}$ and all positive integers $n$. Moreover, the
exponent $\max \left\{ \left| \frac{1}{\mathbf{r}} \right| -\frac{m+1}{2}+
\left| \frac{1}{\mathbf{p}} \right|, 0\right\} $ is optimal.
\end{theorem}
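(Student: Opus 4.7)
My plan is to combine Lemma \ref{8800}, the generalized Hardy--Littlewood inequality (Theorem \ref{main111}), H\"older's inequality for mixed sums (Theorem \ref{GenHol}), and the Kahane--Salem--Zygmund inequality, splitting the argument according to whether $\left|\frac{1}{\mathbf{r}}\right|\le \frac{m+1}{2}-\left|\frac{1}{\mathbf{p}}\right|$ (so the exponent in \eqref{opt} vanishes) or $\left|\frac{1}{\mathbf{r}}\right|> \frac{m+1}{2}-\left|\frac{1}{\mathbf{p}}\right|$ (so it is strictly positive).

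In the first regime I would observe that $\mathbf{r}\in[1,2]^m$ together with $\left|\frac{1}{\mathbf{r}}\right|\le \frac{m+1}{2}-\left|\frac{1}{\mathbf{p}}\right|$ automatically forces $r_j\ge \bigl(1-\left|\frac{1}{\mathbf{p}}\right|\bigr)^{-1}$ for every $j$: indeed, if some $r_{j_0}$ violated this, then $1/r_{j_0}>1-\left|\frac{1}{\mathbf{p}}\right|$, and combining with $1/r_j\ge 1/2$ for $j\ne j_0$ would produce $\left|\frac{1}{\mathbf{r}}\right|>(1-\left|\frac{1}{\mathbf{p}}\right|)+(m-1)/2=\frac{m+1}{2}-\left|\frac{1}{\mathbf{p}}\right|$, contradicting the hypothesis. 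Hence $\mathbf{s}:=\mathbf{r}$ satisfies both the range and the summation conditions of Theorem \ref{main111}, which directly yields \eqref{opt} with $n$-exponent $0$.

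In the second regime I would use Lemma \ref{8800} to select $s_1,\ldots,s_m\in \bigl[\bigl(1-\left|\frac{1}{\mathbf{p}}\right|\bigr)^{-1},2\bigr]$ with $s_j\ge r_j$ and $\left|\frac{1}{\mathbf{s}}\right|=\frac{m+1}{2}-\left|\frac{1}{\mathbf{p}}\right|$, and then define $x_j\in (0,+\infty]$ by $1/x_j=1/r_j-1/s_j\ge 0$. Applying Theorem \ref{GenHol} with $N=2$ to the trivial factorization $T(e_{\mathbf{i}})=T(e_{\mathbf{i}})\cdot 1$, with exponents $q_j(1)=s_j$ and $q_j(2)=x_j$, separates the left-hand side of \eqref{opt} into the mixed $\mathbf{s}$-sum for $T$ times the all-ones mixed $\mathbf{x}$-sum; the latter equals $n^{\sum_j 1/x_j}=n^{|1/\mathbf{r}|-|1/\mathbf{s}|}$. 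Theorem \ref{main111} bounds the $\mathbf{s}$-factor by $D_{m,\mathbf{s},\mathbf{p}}^{\mathbb{K}}\|T\|$, and the identity $\left|\frac{1}{\mathbf{s}}\right|=\frac{m+1}{2}-\left|\frac{1}{\mathbf{p}}\right|$ produces precisely the exponent claimed in \eqref{opt}.

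For optimality, since $\left|\frac{1}{\mathbf{p}}\right|\le 1/2$ forces $p_j\ge 2$ for each $j$ (so $\alpha(p_j)=1/2-1/p_j$), the Kahane--Salem--Zygmund inequality furnishes an $m$-linear form $A=\sum \pm z^{(1)}_{i_1}\cdots z^{(m)}_{i_m}$ with $\|A\|\le C_m n^{(m+1)/2-\left|\frac{1}{\mathbf{p}}\right|}$. Since $A(e_{i_1},\ldots,e_{i_m})=\pm 1$, the left-hand side of \eqref{opt} at $A$ equals $n^{|1/\mathbf{r}|}$, forcing any admissible exponent $t$ to satisfy $t\ge |1/\mathbf{r}|-\frac{m+1}{2}+\left|\frac{1}{\mathbf{p}}\right|$; in the zero-exponent regime, applying \eqref{opt} to the rank-one form $T(x^{(1)},\ldots,x^{(m)})=x_1^{(1)}\cdots x_1^{(m)}$ (of norm $1$ and left-hand side $1$) rules out any $t<0$. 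The only genuinely delicate step is invoking Lemma \ref{8800} to produce the interpolating $\mathbf{s}$; once that is available, the remainder is a direct splicing of the quoted tools.
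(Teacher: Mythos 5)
Your proposal is correct and follows essentially the same route as the paper: Lemma \ref{8800} to produce the interpolating exponents $\mathbf{s}$, the mixed-sums H\"older inequality to split off the factor $n^{\left|\frac{1}{\mathbf{r}}\right|-\left|\frac{1}{\mathbf{s}}\right|}$, Theorem \ref{main111} for the $\mathbf{s}$-factor, and the Kahane--Salem--Zygmund form for the lower bound. You even supply a detail the paper leaves implicit, namely the verification that in the zero-exponent regime the hypotheses force $r_j\geq\bigl(1-\left|\frac{1}{\mathbf{p}}\right|\bigr)^{-1}$ so that Theorem \ref{main111} is directly applicable.
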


\begin{proof}
The case
\begin{equation*}
\frac{1}{r_{1}}+\cdots +\frac{1}{r_{m}}\leq \frac{m+1}{2}-\left|\frac{1}{%
\mathbf{p}}\right|,
\end{equation*}
is precisely Theorem \ref{main111}. Let us suppose that
\begin{equation*}
\frac{1}{r_{1}}+\cdots +\frac{1}{r_{m}}>\frac{m+1}{2}-\left|\frac{1}{\mathbf{%
p}}\right|,
\end{equation*}%
and let $s_{1},\ldots,s_{m}$ be as in Lemma \ref{8800}. Let $%
x_{1},\ldots,x_{m}$ be such that
\begin{equation*}
\frac{1}{r_{i}}=\frac{1}{s_{i}}+\frac{1}{x_{i}},\ \ \ \text{ for all }%
i=1,\dots ,m.
\end{equation*}%
Using again H\"{o}lder's inequality for mixed $\ell _{p}$ spaces (Theorem %
\ref{GenHol}) and the generalized Hardy--Littlewood inequality (Theorem \ref%
{main111}) we have
\begin{align*}
& \left( \sum_{i_{1}=1}^{n}\left( \dots \left( \sum_{i_{m}=1}^{n}\left\vert
T(e_{i_{1}},\dots ,e_{i_{m}})\right\vert ^{r_{m}}\right) ^{\frac{r_{m-1}}{%
r_{m}}}\dots \right) ^{\frac{r_{1}}{r_{2}}}\right) ^{\frac{1}{r_{1}}} \\
& \leq \left( \sum_{i_{1}=1}^{n}\left( \dots \left(
\sum_{i_{m}=1}^{n}\left\vert T(e_{i_{1}},\dots ,e_{i_{m}})\right\vert
^{s_{m}}\right) ^{\frac{s_{m-1}}{s_{m}}}\dots \right) ^{\frac{s_{1}}{s_{2}}%
}\right) ^{\frac{1}{s_{1}}}\cdot \left( \sum_{i_{1}=1}^{n}\left( \dots
\left( \sum_{i_{m}=1}^{n}1^{x_{m}}\right) ^{\frac{x_{m-1}}{x_{m}}}\dots
\right) ^{\frac{x_{1}}{x_{2}}}\right) ^{\frac{1}{x_{1}}} \\
& \leq D_{m,s,p}^{\mathbb{K}}\Vert T\Vert \cdot n^{\frac{1}{x_{1}}+\dots +%
\frac{1}{x_{m}}} \\
& =D_{m,s,p}^{\mathbb{K}}\Vert T\Vert \cdot n^{\frac{1}{r_{1}}+\dots +\frac{1%
}{r_{m}} - \left( \frac{1}{s_{1}}+\dots +\frac{1}{s_{m}} \right)} \\
& =D_{m,s,p}^{\mathbb{K}}\Vert T\Vert \cdot n^{\max \left\{ \left|\frac{1}{%
\mathbf{p}}\right| -\frac{m+1}{2}+\frac{1}{r_{1}}+\cdots +\frac{1}{r_{m}}%
,0\right\} }.
\end{align*}%
The optimality is proved as in the previous sections, using the
Kahane--Salem--Zygmund inequality.
\end{proof}

\begin{remark}
Whenever $\left|\frac{1}{\mathbf{p}}\right|\leq \frac 12$ and $1\leq
r_1,\ldots,r_m\leq 2m/(m+1-2|1/\mathbf{p}|)$ then we can apply both, Theorem %
\ref{thm_uni} (2) and Theorem \ref{alt}. The exponents we get in both cases
coincide since in this case $M_<^{HL}=\{1,\ldots,m\}$.

But there is still room to apply both results whenever $\left|\frac{1}{%
\mathbf{p}}\right|\leq \frac 12$ and some of the $r_j$ are between $%
2m/(m+1-2|1/\mathbf{p}|)$ and $2$. On one hand, if all $r_j$ are greater
than or equal to $2m/(m+1-2|1/\mathbf{p}|)$ then $M_<^{HL}$ is empty and
both exponent $s$ are equal to $0$ (as expected). On the other hand, if some
of the $r_j$ are bigger and some other are smaller than $2m/(m+1-2|1/\mathbf{%
p}|)$, we get different exponents, being the $s=s_{\ref{alt}}$ from Theorem %
\ref{alt} smaller than the exponent $s=s_{\ref{thm_uni}}$ from Theorem \ref%
{thm_uni}, and so, better. Indeed, if $r_1,\ldots,r_m\in [1,2]$ are such
that $0<k:=|M_<^{HL}|<m$ then,
\begin{equation*}
\sum_{j\in M_\geq^{HL}}\frac 1{r_j}\leq \frac{m+1-2\left|\frac{1}{\mathbf{p}}%
\right|}{2m}(m-k)=\left(\frac{m+1}2-\left|\frac{1}{\mathbf{p}}%
\right|\right)\left(1-\frac km\right),
\end{equation*}
and this inequality is equivalent to being $s_{\ref{alt}}\leq s_{\ref%
{thm_uni}}$.
\end{remark}

\bigskip

\section{Approach for $p_m \in (1,2]$}

\label{sec-final}

The paper \cite{ANPS_new} fully describes several inequalities involving
operators on $\ell_p$ spaces, some of which are useful for our purpose in
this paper. For all positive integers $m,\,k=1,\dots,m$, let us define
\begin{equation*}
\delta_{m-k+1}^{p_k, \dots, p_m} := \frac{1}{1-\left( \frac{1}{p_k}+ \cdots
+ \frac{1}{p_m} \right)}.
\end{equation*}
The following is one of the main results from \cite{ANPS_new}:

\begin{theorem}
\label{delta} Let $m\geq2$ be a positive integer, $q_1, \dots, q_m >0$ and $%
1<p_m \leq 2 < p_1, \dots, p_{m-1}$ be such that $\left| \frac{1}{\mathbf{p}}
\right| < 1$. The following assertions are equivalent:

\noindent (a) There exists $C_{p_1, \dots, p_m}\geq 1$ such that
\begin{equation*}
\left( \sum_{i_{1}=1}^{+\infty} \left( \ldots \left(
\sum_{i_{m}=1}^{+\infty}\left\vert A\left( e_{i_{1}},\ldots
,e_{i_{m}}\right) \right\vert ^{q_{m}} \right) ^{\frac{q_{m-1}}{q_{m}}%
}\ldots \right) ^{\frac{q_{1}}{q_{2}}} \right) ^{\frac{1}{q_{1}}} \leq
C_{p_1, \dots, p_m} \|A\|
\end{equation*}
for all bounded $m$-linear operator $A:\ell_{p_1} \times \cdots \times
\ell_{p_m} \to \mathbb{K}$.

\bigskip

\noindent(b) The exponents $q_1, \dots, q_m >0$ satisfy
\begin{equation*}
q_1 \geq \delta_{m}^{p_1,\dots,p_m},
q_2\geq\delta_{m-1}^{p_2,\dots,p_m},\dots, q_{m-1}\geq
\delta_{2}^{p_{m-1},p_m}, q_m\geq \delta_{1}^{p_m}.
\end{equation*}
\end{theorem}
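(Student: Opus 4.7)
The plan is to prove both implications, with (b) $\Rightarrow$ (a) as the analytical core and (a) $\Rightarrow$ (b) handled by explicit test-form constructions.

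For (a) $\Rightarrow$ (b), which I expect to be the easier direction, I would force each inequality $q_k\geq\delta_{m-k+1}^{p_k,\ldots,p_m}$ separately by choosing an $m$-linear form that isolates the $k$-th coordinate slot. The natural candidate is $A(x^{(1)},\ldots,x^{(m)}) = x^{(1)}_1 x^{(2)}_1 \cdots x^{(k-1)}_1 \cdot B(x^{(k)},\ldots,x^{(m)})$, where $B:\ell_{p_k}^n\times\cdots\times\ell_{p_m}^n\to\mathbb{K}$ is an $(m-k+1)$-linear form saturating the (flat) Dimant--Sevilla-Peris inequality for the tail exponents $(p_k,\ldots,p_m)$. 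Since the first $k-1$ slots are pinned to $e_1$, the nested sum on the left of (a) collapses to the flat $\ell^{q_k}$-norm of $(B(e_{i_k},\ldots,e_{i_m}))$, while $\|A\|=\|B\|$. Letting $n\to\infty$ then forces $q_k\geq\delta_{m-k+1}^{p_k,\ldots,p_m}$. The saturation for $B$ is classical: a diagonal form $B(y^{(k)},\ldots,y^{(m)})=\sum_j y^{(k)}_j\cdots y^{(m)}_j$ combined with the bound $\|B\|\leq n^{1-(\frac{1}{p_k}+\cdots+\frac{1}{p_m})}$ via H\"older delivers the sharpness, with a Kahane--Salem--Zygmund randomization as a backup if finer exponents are needed.

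For (b) $\Rightarrow$ (a) I would proceed by induction on $m$. The structural fact driving the induction is that for every $k$, $\frac{1}{p_k}+\cdots+\frac{1}{p_m}\in[\tfrac12,1)$, since $p_m\leq 2$ gives the lower bound and $|\tfrac{1}{\mathbf{p}}|<1$ the upper bound. Thus every tail $(p_k,\ldots,p_m)$ lies in the Dimant--Sevilla-Peris regime. By monotonicity of $\ell^{q}$-norms it suffices to treat the critical case $q_k=\delta_{m-k+1}^{p_k,\ldots,p_m}$ for all $k$. At the base $m=2$ (and in the inductive step), I identify the innermost $\ell^{p_m'}$-sum over $i_m$ with the dual norm of $x\mapsto A(e_{i_1},\ldots,e_{i_{m-1}},x)$ on $\ell_{p_m}$, which packages the inner sum as a vector-valued $(m-1)$-linear form $\tilde A:\ell_{p_1}\times\cdots\times\ell_{p_{m-1}}\to\ell_{p_m'}$ with $\|\tilde A\|=\|A\|$. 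The induction hypothesis (or, at $m=2$, a linear vector-valued summability estimate) applied to $\tilde A$ then yields the remaining outer mixed-norm bound.

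The principal obstacle is precisely this vector-valued step: the scalar Dimant--Sevilla-Peris inequality cited in the excerpt does not directly apply to operators valued in iterated mixed-norm sequence spaces, so either one extends it via cotype of $\ell_{p_m'}$ and of the successive target spaces, or one bypasses the induction by a single application of H\"older for mixed sums (Theorem~\ref{GenHol}). I expect the cleanest route to be the latter, mirroring the proof of Theorem~\ref{thm_uni}(2): split $\frac{1}{q_k}=\frac{1}{\delta_{m-k+1}^{p_k,\ldots,p_m}}+\frac{1}{x_k}$ with $x_k\in(0,+\infty]$, apply Theorem~\ref{GenHol} to separate the $\delta$-exponent block (controlled slot by slot through the scalar Dimant--Sevilla-Peris inequality, with the crucial point that each tail sum is $\geq\tfrac12$) from the $\ell^{x_k}$ block, whose factor becomes $1$ by the choice $q_k=\delta_{m-k+1}^{p_k,\ldots,p_m}$. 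Verifying that the exponents produced by Theorem~\ref{GenHol} indeed telescope into the scalar Dimant--Sevilla-Peris exponent for the full $m$-linear form is the technical heart and where I would concentrate the bookkeeping.
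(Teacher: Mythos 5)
Note first that the paper does not prove Theorem~\ref{delta} at all: it is imported verbatim from \cite{ANPS_new}, so there is no internal argument to measure your proposal against; it has to stand on its own. The direction (a)~$\Rightarrow$~(b) does stand: taking $B$ to be the diagonal form $B(y^{(k)},\dots,y^{(m)})=\sum_{j}y^{(k)}_j\cdots y^{(m)}_j$, the coefficient array of $A$ is supported on $\{i_1=\dots=i_{k-1}=1,\ i_k=\dots=i_m\}$, so every nested sum genuinely collapses and the left-hand side equals $n^{1/q_k}$, while H\"older gives $\Vert A\Vert=\Vert B\Vert\leq n^{1-(1/p_k+\cdots+1/p_m)}=n^{1/\delta_{m-k+1}^{p_k,\dots,p_m}}$; letting $n\to\infty$ forces $q_k\geq\delta_{m-k+1}^{p_k,\dots,p_m}$. (Be aware the collapse uses that $B$ is diagonal, not merely that it ``saturates'' a flat inequality; for a generic extremizer the inner sums would remain mixed norms in $q_{k+1},\dots,q_m$.)

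The gap is in (b)~$\Rightarrow$~(a). Reducing to the critical tuple $q_k=\delta_{m-k+1}^{p_k,\dots,p_m}$ is correct, but the route you call cleanest --- mixed H\"older plus the scalar Dimant--Sevilla-Peris inequality --- cannot close it. At the critical exponents every $x_k=+\infty$, so Theorem~\ref{GenHol} contributes nothing, and what remains to prove is exactly the mixed-norm inequality with exponents $\left(\delta_m^{p_1,\dots,p_m},\delta_{m-1}^{p_2,\dots,p_m},\dots,\delta_1^{p_m}\right)$. These exponents strictly decrease as one moves inward (down to $\delta_1^{p_m}=p_m'$), so by the coordinatewise $\ell_q$-inclusions this mixed norm \emph{dominates} the flat $\ell^{\delta_m^{p_1,\dots,p_m}}$ norm controlled by the scalar Dimant--Sevilla-Peris inequality: the target is strictly stronger than the ingredient you propose to feed into H\"older, and the observation that every tail sum $\sum_{j\geq k}1/p_j$ lies in $[\frac12,1)$ does not repair this. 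Your alternative route --- identifying the innermost sum with the exact $\ell_{p_m'}$ dual norm and inducting on the resulting vector-valued $(m-1)$-linear map $\tilde A:\ell_{p_1}\times\cdots\times\ell_{p_{m-1}}\to\ell_{p_m'}$ --- is indeed the right skeleton, but the step you yourself flag as the obstacle (a Hardy--Littlewood-type summability estimate for multilinear maps with values in $\ell_{p_m'}$, obtained through its cotype $p_m'$ and the theory of multiple summing operators) is precisely the substance of the theorem. Leaving that lemma unstated and unproved leaves the hard implication unproved.
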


The following Lemma is a variant from \cite[Lemma 2.1]{ANPS_new}.

\begin{lemma}
\label{lema2} Let $p_1, \dots, p_m \in [1,+\infty]$ and $q_1, \dots, q_m,
t_1, \dots, t_m, s_2, \dots, s_m \in (0,+\infty)$. Let us consider the
following properties:

\bigskip

\noindent (a) If there is $D_{p_2, \dots, p_m}>0$ such that, for all $(m-1)$%
-linear forms $A:\ell_{p_2} \times \dots \times \ell_{p_m} \to \mathbb{K}$,
\begin{equation*}
\left( \sum_{i_{2}=1}^{n_2} \left(\sum_{i_3=1}^{n_3} \ldots \left(
\sum_{i_{m}=1}^{n_m}\left\vert A\left( e_{i_{2}},\ldots ,e_{i_{m}}\right)
\right\vert ^{q_{m}} \right) ^{\frac{q_{m-1}}{q_{m}}}\ldots \right) ^{\frac{%
q_{2}}{q_{3}}} \right) ^{\frac{1}{q_{2}}} \leq D_{p_2, \dots, p_m} n_2^{t_2}
\cdot\dots \cdot n_m^{t_m} \|A\|
\end{equation*}
holds, then $t_i \geq s_i$ whenever some property $P_i$ is satisfied, for
all $i\in \{ 2, \dots , m\}$.

\bigskip

\noindent (b) If there is $C_{p_1, \dots, p_m}>0$ such that, for all $m$%
-linear forms $B: \ell_{p_1} \times \dots \times \ell_{p_m} \to \mathbb{K}$,
\begin{equation*}
\left( \sum_{i_{1}=1}^{n_1} \left(\sum_{i_2=1}^{n_2} \ldots \left(
\sum_{i_{m}=1}^{n_m}\left\vert B\left( e_{i_{1}},\ldots ,e_{i_{m}}\right)
\right\vert ^{q_{m}} \right) ^{\frac{q_{m-1}}{q_{m}}}\ldots \right) ^{\frac{%
q_{1}}{q_{2}}} \right) ^{\frac{1}{q_{1}}} \leq C_{p_1, \dots, p_m} n_1^{t_1}
\cdot\dots \cdot n_m^{t_m} \|B\|
\end{equation*}
holds, then $t_i \geq s_i$ whenever the property $P_i$ is satisfied, for all
$i\in \{ 2, \dots , m\}$.

\bigskip

\noindent Then property (a) implies property (b).
\end{lemma}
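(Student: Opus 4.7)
The plan is to deduce (b) from (a) by reducing an $m$-linear inequality to an $(m-1)$-linear one, realizing the first variable as a trivial linear factor so that the hypothesis on $(m-1)$-linear forms applies directly.

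More concretely, starting from an arbitrary $(m-1)$-linear form $A:\ell_{p_2}^{n_2}\times\cdots\times\ell_{p_m}^{n_m}\to\mathbb{K}$, I would define the $m$-linear form $B:\ell_{p_1}^{1}\times\ell_{p_2}^{n_2}\times\cdots\times\ell_{p_m}^{n_m}\to\mathbb{K}$ by
\[
B(x^{(1)},x^{(2)},\ldots,x^{(m)}) := x^{(1)}_{1}\cdot A(x^{(2)},\ldots,x^{(m)}).
\]
Taking $n_1=1$ forces $|x^{(1)}_1|=\|x^{(1)}\|_{p_1}$, so $\|B\|=\|A\|$. On the monomial basis one has $B(e_{i_1},\ldots,e_{i_m})=\delta_{i_1,1}A(e_{i_2},\ldots,e_{i_m})$, and since the outermost index runs only over $i_1=1$, the mixed iterated sum appearing in (b) for $B$ collapses — after unwinding the telescoping exponents $q_k/q_{k+1}$ — to exactly the mixed iterated sum appearing in (a) for $A$.

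Plugging $B$ into the inequality assumed in (b) (with $n_1=1$ so that the factor $n_1^{t_1}$ equals $1$) then yields
\[
\left(\sum_{i_2=1}^{n_2}\left(\ldots\left(\sum_{i_m=1}^{n_m}|A(e_{i_2},\ldots,e_{i_m})|^{q_m}\right)^{\frac{q_{m-1}}{q_m}}\ldots\right)^{\frac{q_2}{q_3}}\right)^{\frac{1}{q_2}}\ \leq\ C_{p_1,\ldots,p_m}\,n_2^{t_2}\cdots n_m^{t_m}\|A\|
\]
for every $(m-1)$-linear $A$ and all $n_2,\ldots,n_m$. This is precisely the inequality of item (a) with constant $D_{p_2,\ldots,p_m}:=C_{p_1,\ldots,p_m}$ and with the same exponents $t_2,\ldots,t_m$ as in (b). Invoking the hypothesis of (a) then yields $t_i\geq s_i$ whenever $P_i$ holds, for each $i\in\{2,\ldots,m\}$, which is exactly the conclusion asserted in (b).

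The construction itself is standard, so the only point that needs a little care is the book-keeping in the second step: one must check that when the Kronecker factor $\delta_{i_1,1}$ is raised to the power $q_m$ and then propagated outward through the nested ratios $q_{k-1}/q_k$, the final $1/q_1$-th root leaves exactly the expression with outer exponent $1/q_2$ that appears in (a). This is entirely mechanical, but it is where one has to be slightly careful, and it explains why $n_1=1$ (rather than an arbitrary $n_1$) is the natural choice — otherwise an uncontrolled $n_1^{1/q_1}$ factor would appear on the left, spoiling the reduction.
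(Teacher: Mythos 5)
Your proposal is correct and is exactly the standard reduction (extend $A$ to $B(x^{(1)},\ldots,x^{(m)})=x^{(1)}_1A(x^{(2)},\ldots,x^{(m)})$, note $\|B\|=\|A\|$, observe the outer sum collapses to the single index $i_1=1$ so the exponents telescope to leave $1/q_2$, and take $n_1=1$ to kill the $n_1^{t_1}$ factor); the paper itself omits the proof, deferring to the variant of Lemma 2.1 in the cited reference, and your argument is precisely the one intended there. No gaps.
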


We have an optimal result when $1<p_m \leq 2$ and $p_1,\dots,p_{m-1}> 2$:

\begin{theorem}
Let $m\geq 2,\, \mathbf{r}:=(r_1, \dots, r_m) \in (0, +\infty)^m$ and $1<
p_m \leq 2 < p_1, \dots, p_{m-1}$ with $\frac{1}{p_1} + \cdots + \frac{1}{p_m%
} < 1$. Then, there exists $D^{\mathbb{K}}_{m, \mathbf{p}, \mathbf{r}} \geq 1
$ such that, for all positive integers $n_1,\dots,n_m$ and for all bounded $m
$-linear form $T:\ell_{p_1} \times \cdots \times \ell_{p_m} \to \mathbb{K}$,
\begin{equation*}
\left( \sum_{i_{1}=1}^{n} \left( \ldots \left( \sum_{i_{m}=1}^{n} \left\vert
T\left( e_{i_{1}},\dots ,e_{i_{m}}\right) \right\vert ^{r_{m}} \right)^{%
\frac{r_{m-1}}{r_{m}}}\ldots \right)^{\frac{r_{1}}{r_{2}}} \right)^{\frac{1}{%
r_{1}}} \leq D^{\mathbb{K}}_{m, \mathbf{p},\mathbf{r}} \cdot \|T\| \cdot
\prod_{k=1}^{m} n_k^{ \max \left\{ \frac{1}{ r_k}-\frac{1}{%
\delta_{m-k+1}^{p_k, \dots, p_m} },0 \right\} }.
\end{equation*}
Moreover, the exponent $\max \left\{ \frac{1}{r_m} - \frac{1}{%
\delta_{1}^{p_m} },0 \right\}$ is optimal and, for each $k=1,\dots,m-1$, the
exponent $\max \left\{ \frac{1}{ r_k}-\frac{1}{\delta_{m-k+1}^{p_k, \dots,
p_m} },0 \right\}$ is optimal if $r_j \geq \delta_{m-j+1}^{p_j, \dots, p_m}$
for $k+1 \leq j \leq m$.
\end{theorem}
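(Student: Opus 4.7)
The plan is to follow the two-step strategy already used in Theorem \ref{alt}: combine H\"older's inequality for mixed sums (Theorem \ref{GenHol}) with Theorem \ref{delta} for the upper bound, and use a \emph{diagonal}-form construction for the optimality.

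For the upper bound, for each $k\in\{1,\ldots,m\}$ I would set
\begin{equation*}
s_k:=\max\bigl\{r_k,\delta_{m-k+1}^{p_k,\ldots,p_m}\bigr\}
\end{equation*}
and choose $x_k\in(0,+\infty]$ by $\frac{1}{r_k}=\frac{1}{s_k}+\frac{1}{x_k}$, so that $\frac{1}{x_k}=\max\bigl\{\frac{1}{r_k}-\frac{1}{\delta_{m-k+1}^{p_k,\ldots,p_m}},0\bigr\}$. Applying Theorem \ref{GenHol} to the coordinate product of the array $\bigl(|T(e_{\mathbf{i}})|\bigr)_{\mathbf{i}}$ and the all-ones array splits the mixed-$(r_1,\ldots,r_m)$-norm of $T$ as the mixed-$(s_1,\ldots,s_m)$-norm of $T$ times $\prod_{k}n_k^{1/x_k}$. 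Since $s_k\geq\delta_{m-k+1}^{p_k,\ldots,p_m}$ for every $k$ by construction, Theorem \ref{delta} bounds the mixed-$(s_1,\ldots,s_m)$-norm by a constant (independent of the $n_k$'s) times $\|T\|$, which yields the claimed estimate.

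For the optimality of the $k$-th exponent, the plan is to test the inequality on the $(m-k+1)$-linear \emph{diagonal} form
\begin{equation*}
A(z^{(k)},\ldots,z^{(m)}):=\sum_{i=1}^{n} z^{(k)}_i z^{(k+1)}_i\cdots z^{(m)}_i,
\end{equation*}
extended to an $m$-linear form via
\begin{equation*}
T(z^{(1)},\ldots,z^{(m)}):=z^{(1)}_1\cdots z^{(k-1)}_1\cdot A(z^{(k)},\ldots,z^{(m)}),
\end{equation*}
with $n_j=1$ for $j<k$. The scalar H\"older inequality with exponents $(p_k,\ldots,p_m,\delta_{m-k+1}^{p_k,\ldots,p_m})$---whose reciprocals sum to $1$ by definition of $\delta_{m-k+1}^{p_k,\ldots,p_m}$---gives $\|T\|=\|A\|\leq n^{1/\delta_{m-k+1}^{p_k,\ldots,p_m}}$, while the diagonal support $\{i_k=\cdots=i_m\}$ collapses the nested mixed-norm of $T$ to exactly $n^{1/r_k}$. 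Substituting this $T$ into the hypothesized inequality with $n_k=n_{k+1}=\cdots=n_m=n$ and invoking the hypothesis $r_j\geq\delta_{m-j+1}^{p_j,\ldots,p_m}$ for $k+1\leq j\leq m$---which forces the $j$-th factor on the right to be $n^0=1$ for $j>k$---produces
\begin{equation*}
n^{1/r_k}\leq D\cdot n^{1/\delta_{m-k+1}^{p_k,\ldots,p_m}+t_k},
\end{equation*}
so letting $n\to+\infty$ gives $t_k\geq 1/r_k-1/\delta_{m-k+1}^{p_k,\ldots,p_m}$; a one-monomial $T$ additionally forces $t_k\geq 0$. The case $k=m$ is unconditional, since there are no slots $j>m$ to constrain, and $A$ reduces to the linear functional $z\mapsto\sum_{i}z_i$ on $\ell_{p_m}^{n}$ with $\|A\|=n^{1/p_m'}=n^{1/\delta_1^{p_m}}$.

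The main obstacle is identifying the correct extremal form: a Kahane--Salem--Zygmund-type construction---the standard optimality tool in the previous sections---would give only $\|A\|\lesssim n^{(m-k+1)/2-\sum_{j=k}^{m-1}1/p_j}$, which strictly exceeds $n^{1/\delta_{m-k+1}^{p_k,\ldots,p_m}}$ in general (since $\frac{m-k+1}{2}>1-\frac{1}{p_m}$ whenever $p_m<2$ or $k<m$), and so yields a weaker lower bound than what is claimed. The diagonal form instead exploits the precise ``defect'' $1-\sum_{j=k}^{m}1/p_j=1/\delta_{m-k+1}^{p_k,\ldots,p_m}$ via the scalar H\"older inequality, producing exactly the sharp exponent predicted by Theorem \ref{delta}.
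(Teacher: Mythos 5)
Your proposal is correct, and the upper bound is obtained exactly as in the paper: split each $r_k$ against $\delta_{m-k+1}^{p_k,\dots,p_m}$ via the mixed-norm H\"older inequality (Theorem \ref{GenHol}) applied to $\bigl(|T(e_{\mathbf i})|\bigr)$ times the all-ones array, then invoke Theorem \ref{delta}; your use of $s_k=\max\{r_k,\delta_{m-k+1}^{p_k,\dots,p_m}\}$ even handles cleanly the case the paper dismisses ``for the sake of clarity.'' For the optimality the extremal objects are also the same --- the paper's proof for $t_1$ uses precisely your diagonal form $D_m^n(x^{(1)},\dots,x^{(m)})=\sum_j x_j^{(1)}\cdots x_j^{(m)}$ with $\|D_m^n\|\le n^{1/\delta_m^{p_1,\dots,p_m}}$, and its $m=2$ base case uses your $k=m$ form $x_1\sum_j y_j$ --- but the organization differs: the paper establishes the lower bounds on $t_2,\dots,t_m$ by induction on $m$, transferring them from the $(m-1)$-linear case through Lemma \ref{lema2}, and only argues directly for $t_1$, whereas you unroll that induction and test each $t_k$ directly with the $(m-k+1)$-linear diagonal form placed in the last slots and $n_j=1$ for $j<k$. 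Your version is slightly more elementary (no appeal to Lemma \ref{lema2}) and makes transparent where the hypothesis $r_j\ge\delta_{m-j+1}^{p_j,\dots,p_m}$ for $j>k$ is used (to kill the factors $n_j^{t_j}$, $j>k$); the computations you rely on --- the collapse of the nested norm to $n^{1/r_k}$ on the diagonal support, the scalar H\"older estimate $\|A\|\le n^{1/\delta_{m-k+1}^{p_k,\dots,p_m}}$, and the one-monomial form giving $t_k\ge 0$ --- all check out. Your closing remark that a Kahane--Salem--Zygmund form would be too large here, and that the diagonal form is what captures the defect $1/\delta$, is exactly the reason the paper also abandons KSZ in this section.
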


\begin{proof}
Firstly, notice that when $r_k \geq \delta_{m-k+1}^{p_k, \dots, p_m}$ for
all $k = 1,\dots,m$ we have precisely Theorem \ref{delta}. Therefore, we may
suppose that there is some exponent $r_j < \delta_{m-j+1}^{p_j, \dots, p_m}$
and, for the sake of clarity, we may consider that this happens for all
exponents. Let $x_1, \dots, x_m>0$ be such that
\begin{equation*}
\frac{1}{r_k} = \frac{1}{\delta_{m-k+1}^{p_k, \dots, p_m}} + \frac{1}{x_k},
\quad \text{ for all } k=1, \dots, m.
\end{equation*}
By H\"older's inequality and Theorem \ref{delta}, we have
\begin{align*}
& \left( \sum_{i_{1}=1}^{n_1}\left( \dots \left(
\sum_{i_{m}=1}^{n_m}\left\vert T(e_{i_{1}},\dots ,e_{i_{m}})\right\vert
^{r_{m}}\right) ^{\frac{r_{m-1}}{r_{m}}}\dots \right) ^{\frac{r_{1}}{r_{2}}%
}\right) ^{\frac{1}{r_{1}}} \\
&\leq \left( \sum_{i_{1}=1}^{n_1}\left( \dots \left(
\sum_{i_{m}=1}^{n_m}\left\vert T(e_{i_{1}},\dots ,e_{i_{m}})\right\vert
^{\delta_{1}^{p_m}}\right) ^{\frac{\delta_{2}^{p_{m-1}, p_m}}{%
\delta_{1}^{p_m}}}\dots \right) ^{\frac{\delta_{m}^{p_1, \dots, p_m}}{%
\delta_{m-1}^{p_2, \dots, p_m}}}\right) ^{\frac{1}{\delta_{m}^{p_1, \dots,
p_m}}} \times \\
& \times \left( \sum_{i_{1}=1}^{n_1}\left( \dots \left(
\sum_{i_{m}=1}^{n_m}1^{x_{m}}\right) ^{\frac{x_{m-1}}{x_{m}}}\dots \right)
^{ \frac{x_{1}}{x_{2}}}\right) ^{\frac{1}{x_{1}}} \\
& \leq C_{p_1, \dots, p_m} \|T\| \cdot n_1^{\frac{1}{x_{1}}} \cdot n_2^{%
\frac{1}{x_2}} \cdot \dots \cdot n_m^{\frac{1}{x_{m}}} \\
& \leq C_{p_1, \dots, p_m}\Vert T\Vert \cdot n_1^{\frac{1}{r_1} - \frac{1}{%
\delta_{m}^{p_1, \dots, p_m}}} \cdot n_2^{\frac{1}{r_2} -\frac{1}{%
\delta_{m-1}^{p_2, \dots, p_m}}} \cdot \dots \cdot n_m^{\frac{1}{r_m} -
\frac{1}{\delta_{1}^{p_m}}}.
\end{align*}

To obtain the optimality, we will proceed by induction on $m$. Initially we
prove the case $m=2$. Let us suppose that there exist $t_1,t_2>0$ that
fulfils
\begin{equation*}
\left( \sum_{i=1}^{n_{1}}  \left( \sum_{j=1}^{n_{2}}  \left|U\left(e_{i},
e_{j}\right)\right|^{r_{2}}  \right)^{\frac{r_1}{r_2}} \right)^{\frac{1}{r_1}%
} \leq D_{2,\mathbf{p}, \mathbf{r}}^{\mathbb{K}} n_{1}^{t_{1}} n_{2}^{t_{2}}
\|U\|,
\end{equation*}
for all bounded bilinear forms $U: \ell_{p_1} \times \ell_{p_2} \to \mathbb{K%
}$, with $1< p_2 \leq 2 < p_1$ and $\frac{1}{p_1} + \frac{1}{p_2} < 1$. By
considering the bilinear form $T:\ell _{p_{1}} \times \ell _{p_{2}} \to
\mathbb{K}$ given by $T(x,y) = x_1\sum_{j=1}^{n_{2}}y_{j}$, we have $\|T\|
\leq n_{2}^{1-\frac{1}{p_{2}}} = n_{2}^{1/\delta_{1}^{p_2}}$. Then,
\begin{equation*}
n_{2}^{\frac{1}{r_{2}}} \leq D_{2,\mathbf{p}, \mathbf{r}}^{\mathbb{K}} \cdot
n_{1}^{t_1} n_2^{t_2} \|T\| \leq D_{2,\mathbf{p}, \mathbf{r}}^{\mathbb{K}}
\cdot n_{1}^{t_1} n_2^{t_2} n_{2}^{1/\delta_{1}^{p_2}}.
\end{equation*}
Thus, making $n_{2}\rightarrow \infty $ we gain
\begin{equation*}
t_{2} \geq \max \left\{ \frac{1}{r_2} - \frac{1}{\delta_{1}^{p_2} },0
\right\}.
\end{equation*}
Now let us prove the optimality of the exponent $\max \left\{ \frac{1}{r_1}
- \frac{1}{\delta_{2}^{p_1, p_2}}, 0 \right\}$ of $n_1$ when $r_2 \geq
\delta_{1}^{p_2}$. The bilinear form $D:\ell _{p_{1}} \times \ell _{p_{2}}
\to \mathbb{K}$ given by $D(x,y) = \sum_{j=1}^{n} x_{j} y_{j}$, satisfies $%
\|D\| \leq n^{1-\frac{1}{p_{1}} - \frac{1}{p_{2}}} =
n^{1/\delta_{2}^{p_1,p_2}}$. If there exists $t_1$ such that
\begin{equation*}
\left( \sum_{i=1}^{n_{1}} \left( \sum_{j=1}^{n_{2}} \left|U\left(e_{i},
e_{j}\right)\right|^{r_{2}} \right)^{\frac{r_1}{r_2}} \right)^{\frac{1}{r_1}%
} \leq D_{2,\mathbf{p}, \mathbf{r}}^{\mathbb{K}} n_{1}^{t_{1}} \|U\|,
\end{equation*}
holds for all $n_1,n_2$ and all bounded bilinear forms $U: \ell_{p_1} \times
\ell_{p_2} \to \mathbb{K}$, then considering $n_1=n_2=n$, we get
\begin{equation*}
n^{\frac{1}{r_1}} = \left( \sum_{i=1}^{n}  \left( \sum_{j=1}^{n}
\left|D\left(e_{i}, e_{j}\right)\right|^{r_{2}}  \right)^{\frac{r_1}{r_2}}
\right)^{\frac{1}{r_1}} \leq D_{2,\mathbf{p}, \mathbf{r}}^{\mathbb{K}}
n^{t_{1}} \|D\| \leq D_{2,\mathbf{p}, \mathbf{r}}^{\mathbb{K}} n^{t_{1}}
n^{1/\delta_{2}^{p_1,p_2}}.
\end{equation*}
Thus, making $n \rightarrow +\infty $ we conclude the argument for $m=2$:
\begin{equation*}
t_1 \geq \max \left\{\frac{1}{r_1} - \frac{1}{\delta_{2}^{p_1,p_2}}, 0
\right\}.
\end{equation*}

Now we suppose the result holds for all bounded $(m-1)$-linear forms. By
induction hypothesis, if
\begin{equation*}
\left( \sum_{i_{2}=1}^{n_{2}}\left( \sum_{i_{3}=1}^{n_{3}}\ldots \left(
\sum_{i_{m}=1}^{n_{m}}\left\vert A\left( e_{i_{2}},\ldots ,e_{i_{m}}\right)
\right\vert ^{r_{m}}\right) ^{\frac{r_{m-1}}{r_{m}}}\ldots \right) ^{\frac{%
r_{2}}{r_{3}}}\right) ^{\frac{1}{r_{2}}}\leq D_{p_{2},\dots
,p_{m}}n_{2}^{t_{2}}\cdot \dots \cdot n_{m}^{t_{m}}\Vert A\Vert
\end{equation*}
holds for all bounded $(m-1)$-forms $A:\ell _{p_{2}}\times \dots \times \ell
_{p_{m}} \to \mathbb{K}$, we have
\begin{equation*}
t_{m} \geq \max \left\{ \frac{1}{r_m} - \frac{1}{\delta_{1}^{p_m} },0
\right\}
\end{equation*}
and
\begin{equation}
\forall\, k=2,\dots,m-1,\ t_k \geq \max \left\{ \frac{1}{ r_k}-\frac{1}{%
\delta_{m-k+1}^{p_k, \dots, p_m} },0 \right\}, \ \text{whenever} \ r_j \geq
\delta_{m-j+1}^{p_j, \dots, p_m},\, \forall\, j=k+1,\dots,m.  \label{ind-hyp}
\end{equation}
Then we suppose that there exist $t_{1},\dots ,t_{m} \geq0$ such that
\begin{equation}
\left( \sum_{i_{1}=1}^{n_{1}}\left( \ldots \left(
\sum_{i_{m}=1}^{n_{m}}\left\vert T\left( e_{i_{1}},\ldots ,e_{i_{m}}\right)
\right\vert ^{r_{m}}\right) ^{\frac{r_{m-1}}{r_{m}}}\ldots \right) ^{\frac{%
r_{1}}{r_{2}}}\right) ^{\frac{1}{r_{1}}} \leq D_{m,\mathbf{p},\mathbf{r}}^{%
\mathbb{K}}\cdot n_{1}^{t_{1}}\cdot n_{2}^{t_{2}}\cdot \dots \cdot
n_{m}^{t_{m}}\Vert T\Vert .  \label{desotim}
\end{equation}%
for all bounded $m$-linear forms $T:\ell_{p_{1}} \times \dots \times
\ell_{p_{m}} \to \mathbb{K}$. By Lemma \ref{lema2} we gain \eqref{ind-hyp}.
It remains to prove the estimate for $t_{1}$ under the condition $r_j \geq
\delta_{m-j+1}^{p_j, \dots, p_m}$ for all $j=2,\dots,m$, which means that we
can take $t_2=\cdots=t_m=0$. Let us consider the $m$-linear form $%
D_{m}^{n}:\ell _{p_{1}}\times \dots \times \ell _{p_{m}}\to \mathbb{K}$
defined by
\begin{equation*}
D_{m}^{n}(x^{(1)},\dots ,x^{(m)}) =\sum_{j=1}^{n}x_{j}^{(1)}x_{j}^{(2)}
\cdots x_{j}^{(m)}.
\end{equation*}
Notice that $\|D_{m}^{n}\| \leq n^{1-\left(\frac{1}{p_{1}} + \cdots +\frac{1%
}{p_{m}} \right)} = n^{1/\delta_m^{p_1,\dots,p_m}}$. By considering $%
n=n_1=\dots=n_m$, from (\ref{desotim}) we obtain
\begin{align*}
n^{\frac{1}{r_{1}}} = \left( \sum_{i=1}^{n} \left|
D_{m}^{n}(e_i,\dots,e_i)\right|^{r_1} \right)^{\frac{1}{r_1}} & = \left(
\sum_{i_{1}=1}^{n_{1}} \left( \dots \left( \sum_{i_{m}=1}^{n_m} \left|
D_{m}^{n} \left( e_{i_{1}},\ldots ,e_{i_{m}}\right) \right|^{r_m} \right)^{%
\frac{r_{m-1}}{r_m}} \dots \right)^{\frac{r_{1}}{r_2}} \right)^{\frac{1}{%
r_{1}}} \\
& \leq D_{m,\mathbf{p},\mathbf{r}}^{\mathbb{K}} \cdot n^{t_{1}} \cdot n^{0}
\cdots n^{0} \|D_{m}^{n}\| \\
& \leq D_{m,\mathbf{p},\mathbf{r}}^{\mathbb{K}} \cdot n^{t_{1}} \cdot
n^{1/\delta_m^{p_1,\dots,p_m}}.
\end{align*}
Taking $n \rightarrow +\infty$, we conclude the proof
\begin{equation*}
t_1 \geq \max \left\{ \frac{1}{r_1}-\frac{1}{\delta_{m}^{p_1,\dots, p_m}},0
\right\}.
\end{equation*}
\end{proof}

\section{Final remark: the linear case}

A natural question rises after all the case we dealt: \emph{what is the
behaviour of the linear case ($m=1$)?} On this situation, the result is
clear and sharp: given a positive integer $n,\, p\in[1,+\infty]$ and $r \in
(0,+\infty)$,
\begin{equation}
\left(\sum_{i=1}^{n}\left|T(e_i)\right|^r\right)^{\frac{1}{r}} \leq n^{\max
\left\{\frac{1}{r}-\frac{1}{p^{\prime }},0\right\}} \|T\|,  \label{linear}
\end{equation}
holds for all linear forms $T:\ell_{p}^n\to \mathbb{K}$, where $p^{\prime }$
denote the conjugate exponent of $p$. Moreover, the exponent $\max \left\{%
\frac{1}{r}-\frac{1}{p^{\prime }},0\right\}$ is optimal. Indeed, when $r\geq
p^{\prime }$ \eqref{linear} is obvious. When $r < p^{\prime }$, %
\eqref{linear} follows by H\"older's inequality, and the exponent optimality
follows by the inclusions of $\ell_q$ spaces.

\end{document}